\renewcommand{\theenumi}%
{\arabic{enumi}}
\renewcommand{\theenumii}%
{\theenumi.\arabic{enumii}}
\renewcommand{\theenumiii}%
{\theenumii.\arabic{enumiii}}
\renewcommand{\labelenumii}%
{\theenumii.}
\newtheorem{theorem}{Theorem}[section]
\newtheorem{lemma}[theorem]{Lemma}
\newtheorem{proposition}[theorem]{Proposition}
\newtheorem{corollary}[theorem]{Corollary}
\newtheorem{open}[theorem]{Open problem}
\renewenvironment{proof}[1][Proof.]{\begin{trivlist}
\item[\hskip \labelsep {\bfseries #1}]}{\end{trivlist}}
\newcommand{\var}[1]{\mathbf{#1}}
\def\qed{\ifhmode\unskip\nobreak\fi\hfill
  \ifmmode\square\else$\square$\fi}
\newcommand{\RR}{\mathbb{R}}
\newcommand{\NN}{\mathbb{N}}
\newcommand{\ZZ}{\mathbb{Z}}
\newcommand{\EE}[1]{\mathbb{E}(#1)}
\newcommand{\oo}{\operatorname{o}}
\newcommand{\OO}{\operatorname{O}}
\newcommand{\TT}{\operatorname{\Theta}}
\newcommand{\ie}{i.e. }
\newcommand{\eg}{e.g. }
\title{The limit of binomial means of a sequence\footnote{This work is partially funded by the Slovenian Research Agency.}}
\date{}
\author{David Gajser\\ IMFM, Jadranska 19, 1000 Ljubljana, Slovenija \\\href{mailto:david.gajser@fmf.uni-lj.si}{david.gajser@fmf.uni-lj.si}}
\begin{document}
\maketitle
\thispagestyle{empty}
\paragraph{Abstract.}  For a sequence $\{a_n\}_{n\geq 0}$ of real numbers and for a parameter $0<p<1$, we define the sequence of its arithmetic means $\{a^*_n\}_{n\geq 0}$ and the sequence of its $p$-binomial means $\{a^p_n\}_{n\geq 0}$ as
\begin{align*}
a^*_n=\frac{1}{n+1}\sum_{i=0}^n a_i & & \textrm{and} && a^p_n=\sum_{i=0}^n\binom{n}{i}p^i(1-p)^{n-i} a_i.\\
\end{align*}
We compare the convergence of sequences $\{a_n\}_{n\geq 0}$, $\{a_n^*\}_{n\geq 0}$ and $\{a_n^p\}_{n\geq 0}$ for various $0<p<1$, \ie we analyze when the convergence of one sequence implies the convergence of the other.

While the sequence $\{a^*_n\}_{n\geq 0}$, known also as the sequence of Ces\`{a}ro means of a sequence, is well studied in the literature, the results about $\{a^p_n\}_{n\geq 0}$ are hard to find. Our main result shows that, if $\{a_n\}_{n\geq 0}$ is a sequence of non-negative
real numbers such that $\{a^p_n\}_{n\geq 0}$ converges to $a\in\RR\cup\{\infty\}$ for some $0<p<1$, then $\{a^*_n\}_{n\geq 0}$ also converges to $a$. We give an application of this result on finite Markov chains.

\paragraph{Keywords:} sequence, convergence, Ces\`{a}ro mean, binomial mean, finite Markov chain.
\vspace{-0.5cm}
\paragraph{Math. Subj. Class.} 00A05

\newpage
\pagenumbering{arabic}
\section{Introduction}

For a sequence $\{a_n\}_{n\geq 0}$ of real numbers and for a parameter $0<p<1$, define the sequence of its arithmetic means $\{a^*_n\}_{n\geq 0}$ and the sequence of its $p$-binomial means $\{a^p_n\}_{n\geq 0}$ as
\begin{align*}
a^*_n=\frac{1}{n+1}\sum_{i=0}^n a_i & & \textrm{and} && a^p_n=\sum_{i=0}^n \binom{n}{i}p^i(1-p)^{n-i}a_i.\\
\end{align*}
We see that $a^*_n$ is an uniformly weighted average of numbers $a_0, a_1\ldots a_n$ and $a^p_n$ is a binomially weighted average of numbers $a_0, a_1\ldots a_n$.

In this article, we will analyse the relationship between the convergence of sequences $\{a_n\}_{n\geq 0}$, $\{a^p_n\}_{n\geq 0}$ and $\{a^*_n\}_{n\geq 0}$. Our results are presented in the following table.
%
\begin{table}[htb]
\centering
{\Large
\begin{tabular}{c|c|c|c|c|}
                                     &$\{a_n\}_{n\geq 0}$&$\{a^p_n\}_{n\geq 0}$&$\{a^q_n\}_{n\geq 0}$&$\{a_n^*\}_{n\geq 0}$\\
\hline
$\{a_n\}_{n\geq 0}$ & $\implies$& $\implies$ & $\implies$& $\implies$\\
$\{a^p_n\}_{n\geq 0}$ & $\centernot\implies$&$\implies$ & $\overset{\textrm{? }a_n\geq 0}{\implies}$& $\overset{a_n\geq 0}{\implies}$\\
$\{a^q_n\}_{n\geq 0}$ &$\centernot\implies$ & $\implies$&$\implies$ & $\overset{a_n\geq 0}{\implies}$\\
$\{a^*_n\}_{n\geq 0}$ &$\centernot\implies$ & $\centernot\implies$&$\centernot\implies$ & $\implies$
\end{tabular}
\caption{The table shows whether the convergence of a sequence on the left implies the convergence of a sequence above, for $0<p<q<1$. The symbol $\implies$ means that the implication holds and the symbol $\centernot\implies$ means that there is a counterexample with $a_n\in\{0,1\}$, for all $n\in\NN$. If there is a condition above $\implies$, then the implication does not hold in general, but it holds if the condition is true. If there is  ? before the condition, we do not know whether the condition is the right one (open problem), but the implication does not hold in general.}
\label{tabela}
}
\end{table}


 The sequence $\{a^*_n\}_{n\geq 0}$ is also known as the sequence of Ces\`{a}ro means and is well studied in the literature~\cite{Boos,Cezaro}. On the other hand, information about the convergence of $p$-binomial means is hard to find. Also the notion of $p$-binomial means is coined especially for the purpose of this article.
 However, there are a few definitions that are close to ours~\cite{Boos,Cezaro, blizu1}. First, we have to mention the Hausdorff means~\cite{Boos,Cezaro}: the $p$-binomial means as well as the arithmetic mean are its special cases. Unfortunately, the Hausdorff means are a bit too general for our purposes in the sense that the known results that are useful for this paper, can be quite easily proven in our special cases.

One of the most similar notions to the $k$-binomial mean is the one of $k$-binomial transform~\cite{blizu1}:
$$\tilde{a}^k_n =\sum_{i=0}^n\binom{n}{i}k^{n} a_i,$$
which coincides with $\{a_n^{p}\}_{n\geq 0}$ for $k=p=0.5$, but is different for other $p$ and $k$. Another similar definition is given with Euler means~\cite[pages 70, 71]{Cezaro}:
$$\overline{a}_n =\frac{1}{2^{n+1}}\sum_{i=0}^n\binom{n+1}{i+1} a_i.$$

Some results, like the first row and the first column of Table~\ref{tabela}, are not hard to prove (Section~\ref{prviRez}) and the diagonal is trivial. Other results (Sections~\ref{compareB} and~\ref{compare}) require more careful ideas. This is true especially for the main result of this paper, Theorem~\ref{mainThm}, which proves, using the notation  from Table~\ref{tabela}, that $$\{a^p_n\}_{n\geq 0}\overset{a_n\geq 0}{\implies}\{a_n^*\}_{n\geq 0}.$$
In Section~\ref{applications} we give an application of this theorem on finite Markov chains.

\section{Preliminaries}
	\label{prelim}

Let $\NN$, $\RR^+$ and $\RR^+_0$ be sets of non-negative integers, positive real numbers and non-negative real numbers, respectively. For $a\in\RR$, let $\lfloor a\rfloor$ be the greatest integer not greater than $a$ and let $\lceil a\rceil$ be the smallest integer not smaller than $a$. We will allow a limit of a sequence to be infinite and we will write $a<\infty$ (which means exactly $a\in\RR$) to emphasize that $a$ is finite.

For functions $f,g:\NN\rightarrow\RR^+_0$ we say that 
\begin{itemize}
\item $f(n)=\OO(g(n))$ if there is some $C>0$ such that $f(n)\leq Cg(n)$ for all sufficiently large $n$,
\item $f(n)=\TT(g(n))$ if there are some $C_1,C_2>0$ such that $C_1g(n)\leq f(n)\leq C_2g(n)$ for all sufficiently large $n$,
\item $f(n)=\oo(g(n))$ if $g(n)$ is non-zero for all large enough $n$ and $\displaystyle\lim_{n\rightarrow\infty}\frac{f(n)}{g(n)}=0$.
\end{itemize}



The following lemma will be useful later.
\begin{lemma}
	\label{limita}
Let $u:\NN\rightarrow\RR\backslash\{0\}$ and $k:\NN\rightarrow\RR$ be functions such that $\displaystyle\lim_{n\rightarrow\infty}u(n)k(n)=\lim_{n\rightarrow\infty}u(n)=0$. Then
$$\lim_{n\rightarrow\infty}\frac{\big(1+u(n)\big)^{k(n)/u(n)}}{e^{k(n)}}=1.$$
\end{lemma}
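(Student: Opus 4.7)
The plan is to reduce to a logarithm, Taylor-expand, and use the two hypotheses $u(n)\to 0$ and $k(n)u(n)\to 0$ to kill the resulting error.

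First, I would observe that since $u(n)\to 0$, for all large enough $n$ we have $|u(n)|<1$, so $1+u(n)>0$ and we may safely take logarithms. Taking $\log$ of the expression inside the limit gives
\[
L(n):=\frac{k(n)}{u(n)}\log\bigl(1+u(n)\bigr)-k(n)=k(n)\left(\frac{\log\bigl(1+u(n)\bigr)}{u(n)}-1\right),
\]
and it suffices to show $L(n)\to 0$, since then exponentiating yields the claimed limit $1$.

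Next, I would use the Taylor expansion of $\log(1+x)$ around $x=0$, which gives $\log(1+x)=x-x^2/2+x^3/3-\cdots$ for $|x|<1$. Dividing by $x$ and subtracting $1$,
\[
\frac{\log(1+x)}{x}-1=-\frac{x}{2}+\frac{x^2}{3}-\cdots=x\cdot h(x),
\]
where $h(x)\to -1/2$ as $x\to 0$; in particular, $h$ is bounded by some constant $M$ on a neighbourhood of $0$. Substituting $x=u(n)$, for all sufficiently large $n$ we get
\[
\bigl|L(n)\bigr|=\bigl|k(n)\,u(n)\,h(u(n))\bigr|\le M\,\bigl|k(n)u(n)\bigr|.
\]

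Finally, since $k(n)u(n)\to 0$ by hypothesis, the right-hand side tends to $0$, so $L(n)\to 0$ and hence the original quotient tends to $e^{0}=1$. The only place where any care is required is the implicit use of continuity of the exponential at $0$ and the basic check that $1+u(n)>0$ eventually so the logarithm (and hence the fractional power) is unambiguously defined; there is no serious obstacle here, the whole lemma is essentially Taylor's theorem combined with the two smallness hypotheses.
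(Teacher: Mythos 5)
Your proof is correct. It does, however, take a genuinely different (and more standard) route than the paper. You pass to logarithms and use the Taylor expansion $\log(1+x)/x-1=x\,h(x)$ with $h$ bounded near $0$, so that the logarithm of the quotient is bounded by $M\,|k(n)u(n)|\to 0$; the conclusion follows by continuity of $\exp$. The paper instead avoids logarithms altogether: it writes $e^x=1+x+g(x)x^2$ with $g$ analytic and positive, substitutes $1+u=e^u-g(u)u^2$, and massages the quotient into the form $\bigl((1-v)^{1/v}\bigr)^{w}$ with $v=g(u)u^2/e^u\to 0$ and $w=ukg(u)/e^u\to 0$, then invokes $\lim_{x\to 0}(1-x)^{1/x}=e^{-1}$. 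Both arguments hinge on exactly the same second-order information and on the hypothesis $u(n)k(n)\to 0$; yours is arguably cleaner because the error term is isolated in a single bounded factor, while the paper's version stays entirely within exponential identities at the cost of a slightly more elaborate algebraic rearrangement. Your two points of care --- that $1+u(n)>0$ eventually so the power is well defined, and the continuity of $\exp$ at $0$ --- are exactly the right ones to flag.
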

\begin{proof}
Because $e^x=\sum\frac{x^i}{i!}$ and  $e^x\geq1+x$, there is an analytic function $g:\RR\rightarrow\RR^+$ such that $e^x=1+x+g(x)x^2$ and $g(0)=\frac{1}{2}$. 
Hence, if we omit writing the argument of functions $u$ and $k$,
\begin{align*}
\lim_{n\rightarrow\infty}\frac{(1+u)^{k/u}}{e^{k}}
=\lim_{n\rightarrow\infty}\left(\frac{e^u-g(u)u^2}{e^u}\right)^{k/u}
= \lim_{n\rightarrow\infty}\left(\left(1-\frac{g(u)u^2}{e^{u}}\right)^{\frac{e^u}{g(u)u^2}}\right)^{\frac{ukg(u)}{e^u}}.
\end{align*}
Because $\displaystyle\lim_{n\rightarrow \infty}\frac{g(u)u^2}{e^u}=0$ and because $\displaystyle\lim_{x\rightarrow 0}(1-x)^{1/x}=e^{-1}$, we have
$$\lim_{n\rightarrow\infty}\left(1-\frac{g(u)u^2}{e^u}\right)^{\frac{e^u}{g(u)u^2}}=e^{-1}.$$
From 
$$\lim_{n\rightarrow\infty}\frac{ukg(u)}{e^u}=0,$$
the result follows.\qed
\end{proof}

\subsection*{Some properties of probability mass function of binomial distribution}
	\label{binomials}

Let $\var{X}$ be a random variable having a binomial distribution with parameters $p\in(0,1)$ and $n\in\NN$. For $i\in\ZZ$, we have by definition
\begin{align*}
\Pr[\var{X}=i]=B_n^i(p)=
\left\{
	\begin{array}{ll}
		\binom{n}{i}p^i(1-p)^{n-i}& \mbox{if }0\leq i\leq n\\
		0& \mbox{else.}
	\end{array}
\right.
\end{align*}
In this subsection, we state and mathematically ground some properties that can be seen from a graph of binomial distribution (see Fig.~\ref{slikaBinom}). The results will be nice, some of them folklore, but the proofs will be technical.

\begin{figure}[hbt]
\centering\includegraphics[width=8cm]{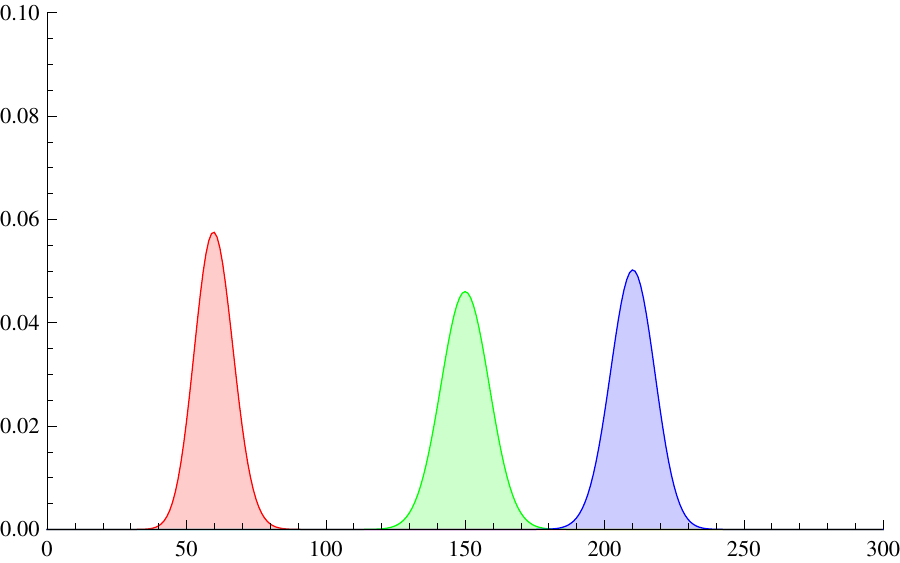}
\caption{Binomial distribution with $n=300$ and $p=0.2$ (red), $p=0.5$ (green), $p=0.7$ (blue). The graphs show $B_n^i(p)$ with respect to $i$.}
	\label{slikaBinom}
\end{figure}

 It is well known (see some basic probability book) that the expected value of $\var{X}$ is $\EE{\var{X}}=pn$. First, we will prove that also the ``peak'' of the probability mass function is roughly at $pn$.

\begin{lemma}
For $p\in(0,1)$, $n\in\NN$ and for $0\leq i\leq n$,
$$B_n^{i}(p)\geq B_n^{i-1}(p)\iff i\leq (n+1)p.$$
\end{lemma}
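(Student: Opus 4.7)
The plan is to reduce the inequality to an algebraic manipulation of the ratio $B_n^i(p)/B_n^{i-1}(p)$, handling the boundary case $i=0$ separately.

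First I would dispatch $i=0$: by definition $B_n^{-1}(p)=0$, so $B_n^{0}(p)=(1-p)^n\geq 0=B_n^{-1}(p)$ holds trivially, and the inequality $0\leq (n+1)p$ also holds because $p>0$; hence both sides of the claimed equivalence are true when $i=0$.

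For $1\leq i\leq n$, every factor in $B_n^{i-1}(p)=\binom{n}{i-1}p^{i-1}(1-p)^{n-i+1}$ is strictly positive, so we are allowed to divide. A direct computation gives
\[
\frac{B_n^{i}(p)}{B_n^{i-1}(p)}=\frac{\binom{n}{i}}{\binom{n}{i-1}}\cdot\frac{p}{1-p}=\frac{n-i+1}{i}\cdot\frac{p}{1-p}.
\]
The condition $B_n^{i}(p)\geq B_n^{i-1}(p)$ is therefore equivalent to $(n-i+1)p\geq i(1-p)$, which after expanding and cancelling $ip$ on both sides becomes $np+p\geq i$, i.e.\ $i\leq (n+1)p$. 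This is exactly the claimed equivalence.

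I do not expect any genuine obstacle: the argument is a one-line ratio computation plus a trivial case check. The only mild care is in noting that $B_n^{i-1}(p)>0$ for $1\leq i\leq n$ (so division is legitimate) and that $B_n^{-1}(p)=0$ forces the $i=0$ case to be treated by the piecewise definition rather than by the ratio formula.
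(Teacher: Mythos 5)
Your proof is correct and follows essentially the same route as the paper: compute the ratio $B_n^{i}(p)/B_n^{i-1}(p)=\frac{(n-i+1)p}{i(1-p)}$ and observe it is at least $1$ exactly when $i\leq (n+1)p$. Your explicit handling of the $i=0$ boundary case (where $B_n^{-1}(p)=0$ and division is not available) is a small extra care the paper's one-line proof leaves implicit.
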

\begin{proof}
The expression
\begin{align*}
\frac{B_n^{i}(p)}{B_n^{i-1}(p)}&=\frac{(n-i+1)p}{i(1-p)}
\end{align*}
is at least 1 iff $i\leq p(n+1)$.\qed
\end{proof}

Next, we state a Chernoff bound proven in~\cite[inequalities~(6) and~(7)]{Hagerup}, which explains why the  probability mass function for binomial distribution ``disappears'' (see Fig.~\ref{slikaBinom}), when $i$ is far enough from $pn$.

\begin{theorem}
	\label{chernoff}
Let $\var{X}$ be a binomially distributed random variable with parameters $p\in(0,1)$ and $n\in\NN$. Then for each $\delta\in(0,1)$,
$$\Pr\big[|\var{X}-np|\geq np\delta\big]\leq 2e^{-\delta^2np/3}.$$
\end{theorem}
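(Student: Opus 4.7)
The plan is to prove this by the standard exponential moments (Chernoff) method. Write $\Pr[|\var{X}-np|\geq np\delta]=\Pr[\var{X}\geq (1+\delta)np]+\Pr[\var{X}\leq (1-\delta)np]$ and bound each tail separately. For the upper tail, for any $t>0$, Markov's inequality applied to the random variable $e^{t\var{X}}$ gives $\Pr[\var{X}\geq (1+\delta)np]\leq e^{-t(1+\delta)np}\EE{e^{t\var{X}}}$. Since $\var{X}$ is a sum of $n$ independent Bernoulli$(p)$ variables, $\EE{e^{t\var{X}}}=(1-p+pe^t)^n$, and using $1+x\leq e^x$ this is at most $\exp\!\bigl(np(e^t-1)\bigr)$. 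So one obtains $\Pr[\var{X}\geq (1+\delta)np]\leq \exp\!\bigl(np(e^t-1-t(1+\delta))\bigr)$. Optimizing in $t$ leads to the choice $t=\log(1+\delta)$, which yields the classical bound $\Pr[\var{X}\geq (1+\delta)np]\leq \bigl((1+\delta)^{-(1+\delta)}e^{\delta}\bigr)^{np}$. The lower tail is handled symmetrically with $t<0$ (take $t=\log(1-\delta)$), giving $\Pr[\var{X}\leq (1-\delta)np]\leq \bigl((1-\delta)^{-(1-\delta)}e^{-\delta}\bigr)^{np}$.

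The remaining, and really the only nontrivial, step is to turn these exact bounds into the clean exponential $e^{-\delta^2 np/3}$ uniformly for $\delta\in(0,1)$. For the upper tail one must show $(1+\delta)\log(1+\delta)-\delta\geq \delta^2/3$ for $\delta\in(0,1)$, and for the lower tail $(1-\delta)\log(1-\delta)+\delta\geq \delta^2/2\geq \delta^2/3$. Both are standard one-variable calculus inequalities: expand $\log(1+\delta)=\delta-\delta^2/2+\delta^3/3-\cdots$ and $\log(1-\delta)=-\delta-\delta^2/2-\delta^3/3-\cdots$, substitute, and compare term by term, or equivalently define $h(\delta)=(1+\delta)\log(1+\delta)-\delta-\delta^2/3$ and check $h(0)=0$ and $h'(\delta)\geq 0$ on $[0,1)$.

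Finally, adding the two tail bounds and using the weaker exponent of the two (namely $\delta^2/3$, which is the upper-tail exponent) gives $\Pr[|\var{X}-np|\geq np\delta]\leq 2e^{-\delta^2 np/3}$. The main obstacle in the argument is exactly the polynomial-vs-logarithm estimate in the second step; the exponent constant $1/3$ (as opposed to the $1/2$ one gets from a na\"ive quadratic Taylor estimate on the lower tail alone) is what forces a slightly careful calculus argument covering the full range $\delta\in(0,1)$.
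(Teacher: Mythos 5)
The paper does not actually prove this theorem: it imports it verbatim from inequalities (6) and (7) of~\cite{Hagerup}, so there is no internal proof to compare yours against. Your argument is the standard exponential-moment (MGF) derivation, and it is correct. I checked the two calculus inequalities that carry all the weight. For the upper tail, $h(\delta)=(1+\delta)\log(1+\delta)-\delta-\delta^2/3$ satisfies $h(0)=0$ and $h'(\delta)=\log(1+\delta)-2\delta/3$, which is a concave function of $\delta$ vanishing at $0$ and positive at $\delta=1$ (since $\log 2>2/3$), hence nonnegative on all of $[0,1]$, so $h\geq 0$ there. For the lower tail, $g(\delta)=(1-\delta)\log(1-\delta)+\delta-\delta^2/2$ has $g'(\delta)=-\log(1-\delta)-\delta\geq 0$, giving the claimed $\delta^2/2\geq\delta^2/3$ bound. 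One point worth making explicit if you write this up: the upper-tail inequality $(1+\delta)\log(1+\delta)-\delta\geq\delta^2/3$ genuinely uses $\delta\leq 1$ (it already fails near $\delta=2$), so the hypothesis $\delta\in(0,1)$ is doing real work on both tails, not merely keeping $\log(1-\delta)$ defined. Since the paper only ever uses the theorem through Corollary~\ref{chernoffc}, where the deviation parameter is $\delta=\alpha(n)/(p\sqrt{n})<1$, your self-contained proof is a perfectly adequate replacement for the citation.
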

We will only use the following corollary of the theorem. Its proof is left to the reader.
\begin{corollary}
	\label{chernoffc}
For $p\in(0,1)$, let $\alpha:\NN\rightarrow\RR^+$ be some function such that $\alpha(n)< p\sqrt{n}$ for all $n$. Then, for all $n\in\NN$, it holds
$$\sum_{i:\,|i-np|\geq\sqrt{n}\alpha(n)}B_n^i(p)\leq 2e^{-\alpha^2(n)/(3p)}.$$
\end{corollary}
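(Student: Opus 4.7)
The plan is to obtain the corollary as a direct substitution into Theorem~\ref{chernoff}. The key observation is that the sum
$$\sum_{i:\,|i-np|\geq\sqrt{n}\alpha(n)}B_n^i(p)$$
is by definition nothing else than $\Pr\big[|\var{X}-np|\geq\sqrt{n}\alpha(n)\big]$, where $\var{X}$ is binomially distributed with parameters $p$ and $n$. So the left-hand side is already in the form that Theorem~\ref{chernoff} bounds; it only remains to find the right choice of~$\delta$.

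First, I would set $\delta(n):=\alpha(n)/(p\sqrt{n})$, so that $np\delta(n)=\sqrt{n}\alpha(n)$. The hypothesis $\alpha(n)<p\sqrt{n}$ is precisely what guarantees $\delta(n)\in(0,1)$, which is exactly the range of validity required by Theorem~\ref{chernoff}. (The case $\alpha(n)=0$ is trivial since then the bound $2e^{0}=2$ is automatic; and one should note that the theorem is stated for fixed $\delta$, so to be safe I would apply it separately for each $n$ with $\delta:=\delta(n)$.)

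Next, plugging this $\delta(n)$ into the exponent of the bound gives
$$\frac{\delta(n)^2 np}{3}=\frac{\alpha^2(n)}{p^2 n}\cdot\frac{np}{3}=\frac{\alpha^2(n)}{3p}.$$
Substituting back into Theorem~\ref{chernoff} yields
$$\sum_{i:\,|i-np|\geq\sqrt{n}\alpha(n)}B_n^i(p)=\Pr\big[|\var{X}-np|\geq np\delta(n)\big]\leq 2e^{-\delta(n)^2 np/3}=2e^{-\alpha^2(n)/(3p)},$$
which is the claim.

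There is no real obstacle here; the only thing to be a bit careful about is that $\delta$ in Theorem~\ref{chernoff} must lie in the open interval $(0,1)$, and this is exactly the role played by the strict inequality $\alpha(n)<p\sqrt{n}$ assumed in the corollary.
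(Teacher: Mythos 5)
Your proof is correct and is exactly the intended argument: the paper leaves this corollary "to the reader," and the substitution $\delta(n)=\alpha(n)/(p\sqrt{n})$, with the hypothesis $\alpha(n)<p\sqrt{n}$ guaranteeing $\delta(n)\in(0,1)$, is the direct route from Theorem~\ref{chernoff}. The exponent computation $\delta(n)^2np/3=\alpha^2(n)/(3p)$ checks out.
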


This corollary also tells us that for large $n$, roughly everything is gathered in an $\OO(\sqrt{n})$ neighborhood of $np$. What is more, the next lemma implies that in $\oo(\sqrt{n})$ neighborhood of $np$, $B^i_n(p)$ does not change a lot.

\begin{lemma}
	\label{okolicentra}
Let $p\in (0,1)$ be a parameter and let $\beta(n):\NN\rightarrow\RR$ be a function such that $|\beta(n)|=\OO(\sqrt{n})$ and $\displaystyle\lim_{n\rightarrow\infty}|\beta(n)|=\infty$. Then, for all large enough $n$, it holds
$$\frac{B^{\lfloor np\rfloor}_n(p)}{B^{\lfloor np\rfloor-\lfloor\beta(n)\rfloor}_n(p)}\leq e^{\frac{1}{p(1-p)}\cdot\frac{\lfloor\beta(n)\rfloor^2}{n}}.$$
\end{lemma}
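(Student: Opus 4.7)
My plan is to take logarithms and reduce the claim to an additive bound on a telescoping sum. Abbreviate $m:=\lfloor np\rfloor$ and $k:=\lfloor\beta(n)\rfloor$, and first handle the case $k>0$. Using the identity
$$\frac{B_n^{i}(p)}{B_n^{i-1}(p)} = \frac{(n-i+1)\,p}{i\,(1-p)},$$
which was already exploited in the preceding mode lemma, I would telescope
$$\log\frac{B_n^{m}(p)}{B_n^{m-k}(p)} = \sum_{j=1}^{k} \log\frac{(n-m+j)\,p}{(m-j+1)(1-p)}.$$
The case $k<0$ can be treated by an analogous telescoping in the opposite direction (or equivalently by invoking $B_n^i(p)=B_n^{n-i}(1-p)$ to exchange the roles of $p$ and $1-p$); the same arithmetic yields the same upper bound, so I suppress the details.

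Introducing $\eta:=np-m\in[0,1)$, the substitutions $n-m+j=n(1-p)+\eta+j$ and $m-j+1=np-(\eta+j-1)$ rewrite each summand as a perturbation of $np(1-p)$ in both numerator and denominator:
$$\log\frac{(n-m+j)\,p}{(m-j+1)(1-p)} = \log\!\left(1+\frac{\eta+j}{n(1-p)}\right) - \log\!\left(1 - \frac{\eta+j-1}{np}\right).$$
Since $|k|=\OO(\sqrt n)$, both arguments have magnitude $\OO(1/\sqrt n)$, so for $n$ large enough they lie in $[-\tfrac12,\tfrac12]$. I would then apply the Taylor bounds $\log(1+x)\leq x$ (for $x>-1$) and $-\log(1-y)\leq y+y^{2}$ (for $|y|\leq\tfrac12$, which follows from $|\log(1-y)+y|\leq y^{2}$), obtaining
$$\log\frac{B_n^{m}(p)}{B_n^{m-k}(p)} \;\leq\; \sum_{j=1}^{k}\!\left[\frac{\eta+j}{n(1-p)} + \frac{\eta+j-1}{np}\right] + \OO\!\left(\frac{k^{3}}{n^{2}}\right).$$

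The arithmetic sum $\sum_{j=1}^{k}j=k(k+1)/2$ makes the main term equal to $\frac{k(k+1)}{2\,np(1-p)}=\frac{k^{2}}{2\,np(1-p)}+\OO(k/n)$. Because the hypothesis $\lim|\beta(n)|=\infty$ forces $k\to\infty$, the $\eta$-contributions produce an error $\OO(k/n)=\oo(k^{2}/n)$; and because $k=\OO(\sqrt n)$, the Taylor error $\OO(k^{3}/n^{2})$ is also $\oo(k^{2}/n)$. Hence for all sufficiently large $n$ the right-hand side is at most $\frac{k^{2}}{np(1-p)}$, and exponentiating yields the claim. The main obstacle will be the bookkeeping: tracking the fractional remainder $\eta$, verifying that the Taylor errors really are of smaller order than the leading $k^{2}/n$ term, and matching up the two sign cases of $\beta(n)$. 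What makes it painless is the factor-of-two slack between the leading estimate $k^{2}/(2np(1-p))$ and the target $k^{2}/(np(1-p))$ in the exponent.
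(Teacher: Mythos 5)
Your proposal is correct and follows essentially the same route as the paper: both telescope the ratio into a product of consecutive one-step ratios $\frac{(n-i+1)p}{i(1-p)}$ and bound the product by an exponential, the paper via a uniform estimate $1+\frac{1}{p(1-p)}\cdot\frac{\lfloor\beta(n)\rfloor}{n}$ on each factor followed by $1+x\leq e^x$, you via logarithms and per-factor Taylor bounds. Your bookkeeping is slightly sharper (it recovers the true leading constant $\frac{1}{2p(1-p)}$, whereas the paper spends that factor of two up front), but the mechanism and the handling of the two sign cases are the same.
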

\begin{proof}
For all large enough $n$ for which $\beta(n)\geq 0$, we have
\begin{align*}
\frac{B^{\lfloor np\rfloor}_n(p)}{B^{\lfloor np\rfloor-\lfloor\beta(n)\rfloor}_n(p)}
&=\frac{\binom{n}{\lfloor np\rfloor}p^{\lfloor\beta(n)\rfloor}}{\binom{n}{\lfloor np\rfloor-\lfloor\beta(n)\rfloor}(1-p)^{\lfloor\beta(n)\rfloor}}\\
&=\prod_{i=0}^{\lfloor\beta(n)\rfloor-1}\frac{(n-\lfloor np\rfloor+\lfloor\beta(n)\rfloor-i)p}{(\lfloor np\rfloor-i)(1-p)}\\
&\leq \prod_{i=0}^{\lfloor\beta(n)\rfloor-1} \left(1+\frac{1}{p(1-p)}\cdot\frac{\lfloor\beta(n)\rfloor}{n}\right).
\end{align*}
In the last inequality we used the fact that
\begin{equation*}
	\label{pomoc}
\frac{(n-\lfloor np\rfloor+\lfloor\beta(n)\rfloor-i)p}{(\lfloor np\rfloor-i)(1-p)}\leq \left(1+\frac{1}{p(1-p)}\cdot\frac{\lfloor\beta(n)\rfloor}{n}\right)
\end{equation*}
 holds for large enough $n$, which is true because it is equivalent to
$$(np-\lfloor np \rfloor)+(\lfloor\beta(n)\rfloor-i)p+i(1-p)+\frac{i\lfloor\beta(n)\rfloor}{pn}\leq \frac{\lfloor np \rfloor}{np}\lfloor\beta(n)\rfloor,$$
where
\begin{itemize}
\item $np-\lfloor np \rfloor\leq 1$,
\item $(\lfloor\beta(n)\rfloor-i)p+i(1-p)\leq \lfloor\beta(n)\rfloor\cdot\max\{p,1-p\}$, since $i<\lfloor\beta(n)\rfloor$ and
\item $\frac{i\lfloor\beta(n)\rfloor}{pn}=\OO(1)$, since $\beta(n)=\OO(\sqrt{n})$.
\end{itemize}

Using the fact that $(1+x)\leq e^x$ for all $x\in\RR$, we see that
\begin{align*}
\frac{B^{\lfloor np\rfloor}_n(p)}{B^{\lfloor np\rfloor-\lfloor\beta(n)\rfloor}_n(p)}
&\leq\prod_{i=0}^{\lfloor\beta(n)\rfloor-1} \left(1+\frac{1}{p(1-p)}\cdot\frac{\lfloor\beta(n)\rfloor}{n}\right)\\
&\leq\prod_{i=0}^{\lfloor\beta(n)\rfloor-1} e^{\frac{1}{p(1-p)}\cdot\frac{\lfloor\beta(n)\rfloor}{n}}\\
&= e^{\frac{1}{p(1-p)}\cdot\frac{\lfloor\beta(n)\rfloor^2}{n}}.
\end{align*}

For all large enough $n$ for which $\beta(n)<0$, we write $b(n)=|\lfloor\beta(n)\rfloor|$ and we have
\begin{align*}
\frac{B^{\lfloor np\rfloor}_n(p)}{B^{\lfloor np\rfloor-\lfloor\beta(n)\rfloor}_n(p)}
&=\frac{\binom{n}{\lfloor np\rfloor}(1-p)^{b(n)}}{\binom{n}{\lfloor np\rfloor+b(n)}p^{b(n)}}\\
&=\prod_{i=0}^{b(n)-1}\frac{(\lfloor np\rfloor +b(n)-i)(1-p)}{(n-\lfloor np\rfloor-i)p}\\
&\leq \prod_{i=0}^{b(n)-1}\frac{(np +b(n)-i)(1-p)}{(n(1-p)-i)p}\\
&\leq \prod_{i=0}^{b(n)-1}\frac{(n-\lfloor n(1-p)\rfloor +b(n)-i)(1-p)}{(\lfloor n(1-p)\rfloor-i)p},
\end{align*}
which is the same as by $\beta(n)\geq 0$, only that $p$ and $(1-p)$ are interchanged. \qed
\end{proof}
Now we know that the values of $B^i_n(p)$ around the peaks on Fig.~\ref{slikaBinom} are close  to the value of the peak. The next lemma will tell us that the peak of $B^i_n(p)$ is asymptotically $\frac{1}{\sqrt{2\pi p(1-p)n}}$.

\begin{lemma}
	\label{osrednji}
For $0<p<1$, it holds
\begin{equation*}
\lim_{n\rightarrow\infty}\sqrt{2\pi p(1-p)n}B^{\lfloor np\rfloor}_n(p)=1.
\end{equation*}
\end{lemma}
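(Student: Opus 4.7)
The plan is to apply Stirling's approximation $n! = \sqrt{2\pi n}(n/e)^n(1+\oo(1))$ to each of $n!$, $k!$, and $(n-k)!$ in
$$B_n^k(p) = \binom{n}{k} p^k (1-p)^{n-k} = \frac{n!}{k!\,(n-k)!}\, p^k(1-p)^{n-k},$$
with $k = \lfloor np\rfloor$. Collecting the square-root prefactors and the ``power'' parts separately gives
$$\sqrt{2\pi p(1-p)n}\, B_n^k(p) = \sqrt{\frac{p(1-p)n^2}{k(n-k)}} \cdot \left(\frac{np}{k}\right)^k \left(\frac{n(1-p)}{n-k}\right)^{n-k} \cdot \bigl(1+\oo(1)\bigr).$$

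The square-root prefactor tends to $1$ because $k/n \to p$ and $(n-k)/n \to 1-p$, so it remains to show the product of the two powers tends to $1$. Writing $\epsilon_n = np - \lfloor np\rfloor \in [0,1)$ for the fractional part of $np$, we have $np/k = 1 + \epsilon_n/k$ and $n(1-p)/(n-k) = 1 - \epsilon_n/(n-k)$. When $\epsilon_n \neq 0$, I would apply Lemma~\ref{limita} to each factor: for the first with $u(n)=\epsilon_n/k$ and $k(n) = \epsilon_n$; for the second with $u(n) = -\epsilon_n/(n-k)$ and $k(n) = -\epsilon_n$. The hypotheses $u(n)\to 0$ and $u(n)k(n)\to 0$ are immediate from $\epsilon_n<1$ and $k,n-k\to\infty$. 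This yields $(np/k)^k = e^{\epsilon_n}(1+\oo(1))$ and $(n(1-p)/(n-k))^{n-k} = e^{-\epsilon_n}(1+\oo(1))$. Indices $n$ with $\epsilon_n = 0$ (i.e.\ $np\in\ZZ$) are trivial, since then both bases equal $1$.

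Multiplying the two asymptotic expansions, the $e^{\pm\epsilon_n}$ factors cancel exactly, and, because $\epsilon_n\in[0,1)$ is bounded, the product of the $(1+\oo(1))$ remainders is still $1+\oo(1)$. Combined with the square-root prefactor, the limit equals $1$, as claimed.

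The main obstacle to watch for is that $\epsilon_n$ itself does not converge --- it oscillates in $[0,1)$ as $np$ drifts past the integers --- so the two individual asymptotics $e^{\epsilon_n}$ and $e^{-\epsilon_n}$ have no limit in isolation; one must multiply them \emph{before} passing to the limit in order to see the cancellation. Boundedness of $\epsilon_n$ is what keeps the relative error terms uniformly tame, so once the bookkeeping around Stirling's formula is arranged correctly, the conclusion drops out.
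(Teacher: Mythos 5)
Your proposal is correct and follows essentially the same route as the paper: Stirling's approximation applied to all three factorials, cancellation of the square-root prefactor, and an application of Lemma~\ref{limita} to the two power factors $\left(1+\frac{np-\lfloor np\rfloor}{\lfloor np\rfloor}\right)^{\lfloor np\rfloor}$ and $\left(1-\frac{np-\lfloor np\rfloor}{n-\lfloor np\rfloor}\right)^{n-\lfloor np\rfloor}$, whose $e^{\pm\epsilon_n}$ asymptotics cancel on multiplication. Your explicit remark that $\epsilon_n$ oscillates and the factors must be multiplied before passing to the limit is a point the paper handles only implicitly (by restricting to $np\notin\NN$), but the argument is the same.
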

\begin{proof}
Using Stirling's approximation
$$\lim_{n\rightarrow\infty}\frac{n!}{\sqrt{2\pi n}\left(\frac{n}{e}\right)^n}=1,$$
we see that
\begin{align*}
\lim_{n\rightarrow\infty}\sqrt{2\pi p(1-p)n}&B^{\lfloor np\rfloor}_n(p)\\
&=\lim_{n\rightarrow\infty}\frac{\sqrt{2\pi p(1-p)n} \cdot \sqrt{2\pi n}\left(\frac{n}{e}\right)^n p^{\lfloor np\rfloor}(1-p)^{n-\lfloor np\rfloor}}{\sqrt{2\pi\lfloor np\rfloor}\left(\frac{\lfloor np\rfloor}{e}\right)^{\lfloor np\rfloor}\cdot \sqrt{2\pi (n-\lfloor np\rfloor)}\left(\frac{n-\lfloor np\rfloor}{e}\right)^{n-\lfloor np\rfloor}}\\
&=\lim_{n\rightarrow\infty}\frac{n^n p^{\lfloor np\rfloor}(1-p)^{n-\lfloor np\rfloor}}{\lfloor np\rfloor^{\lfloor np\rfloor}\cdot (n-\lfloor np\rfloor)^{n-\lfloor np\rfloor}}\\
&=\lim_{n\rightarrow\infty}\left(\frac{np}{\lfloor np\rfloor}\right)^{\lfloor np\rfloor}\cdot\left(\frac{n-np}{n-\lfloor np\rfloor}\right)^{n-\lfloor np\rfloor}\\
&=\lim_{n\rightarrow\infty}\left(1+\frac{np-\lfloor np\rfloor}{\lfloor np\rfloor}\right)^{\lfloor np\rfloor}\cdot\left(1-\frac{np-\lfloor np\rfloor}{n-\lfloor np\rfloor}\right)^{n-\lfloor np\rfloor}\\
&=\lim_{n\rightarrow\infty}e^{np-\lfloor np\rfloor}\cdot e^{-(np-\lfloor np\rfloor)}=1,
\end{align*}
where the last line follows by Lemma~\ref{limita} (we can restrict ourselves only to those $n$ for which $np\not\in\NN$).\qed
\end{proof}

\section{Comparing the limit of $\{a_n\}_{n\geq 0}$ with the limit of $\{a^p_n\}_{n\geq 0}$  and $\{a^*_n\}_{n\geq 0}$} 
	\label{prviRez}
In this section we present results about the relationship between the convergence of  $\{a_n\}_{n\geq 0}$ with the convergence of $\{a^p_n\}_{n\geq 0}$  and $\{a^*_n\}_{n\geq 0}$. It is well known~\cite{Cezaro} that if $\{a_n\}_{n\geq 0}$  converges to $a\in\RR\cup\{\infty\}$, then so does $\{a^*_n\}_{n\geq 0}$. The next theorem tells us that in this case, $\{a^p_n\}_{n\geq 0}$ also converges to the same limit.

\begin{theorem}
	\label{enost}
If $\{a_n\}_{n\geq 0}$  converges to $a\in\RR\cup\{\infty\}$, then $\{a^*_n\}_{n\geq 0}$ and $\{a^p_n\}_{n\geq 0}$ converge to $a$ for all $0<p<1$.
\end{theorem}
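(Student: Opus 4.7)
My plan is to treat the Ces\`{a}ro and binomial conclusions in parallel, using the same ``split at a threshold $N$'' strategy that underlies the classical proof of Ces\`{a}ro convergence, and then handle the finite limit and the infinite limit as separate cases.

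For the finite case $a\in\RR$: Given $\varepsilon>0$, I would choose $N$ large enough that $|a_i-a|<\varepsilon$ for every $i>N$. For the arithmetic means, write
$$a^*_n-a=\frac{1}{n+1}\sum_{i=0}^N(a_i-a)+\frac{1}{n+1}\sum_{i=N+1}^n(a_i-a),$$
where the first sum is a fixed quantity divided by $n+1$ (hence $\OO(1/n)$) and the second is bounded in absolute value by $\varepsilon$. Letting $n\to\infty$ then $\varepsilon\to 0$ gives $a^*_n\to a$. For the binomial means, the analogous decomposition is
$$a^p_n-a=\sum_{i=0}^N B_n^i(p)(a_i-a)+\sum_{i=N+1}^n B_n^i(p)(a_i-a),$$
where the second sum is again bounded by $\varepsilon$ (since $\sum_{i=0}^n B_n^i(p)=1$). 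The first sum is bounded by $\max_{i\leq N}|a_i-a|\cdot\sum_{i=0}^N B_n^i(p)$, so I need the fact that $\sum_{i=0}^N B_n^i(p)\to 0$ as $n\to\infty$ for each fixed $N$. This is exactly the kind of ``left tail'' estimate supplied by Corollary~\ref{chernoffc}: since $np\to\infty$, for every fixed $N$ the index $N$ eventually lies to the left of $np$ by more than $\sqrt{n}\alpha(n)$ for a suitable $\alpha(n)\to\infty$, and the Chernoff bound makes the tail sum tend to $0$.

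For the infinite case $a=\infty$: A sequence diverging to $\infty$ is bounded below, say $a_i\geq B$ for all $i$. Given $M>0$, pick $N$ so that $a_i>M$ for $i>N$. Then
$$a^*_n\geq \frac{B(N+1)}{n+1}+M\cdot\frac{n-N}{n+1}\xrightarrow[n\to\infty]{}M,$$
so $\liminf a^*_n\geq M$ for every $M$; hence $a^*_n\to\infty$. The identical argument works for $a^p_n$, replacing $\frac{1}{n+1}$ by $B_n^i(p)$ and using again that $\sum_{i=0}^N B_n^i(p)\to 0$ while $\sum_{i=N+1}^n B_n^i(p)\to 1$.

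I do not expect any serious obstacle: the Ces\`{a}ro half is standard, and the only non-trivial ingredient for the binomial half is the concentration of $B_n^i(p)$ away from any fixed window $[0,N]$, which has already been prepared in Section~\ref{prelim}. The one place to be a little careful is the $a=\infty$ case, where I must remember to exploit boundedness below of $\{a_n\}$ to control the ``first $N$ terms'' contribution rather than assuming the terms are nonnegative.
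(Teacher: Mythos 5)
Your proposal is correct and follows essentially the same threshold-splitting argument as the paper's proof; the only differences are that you justify $\sum_{i=0}^N B_n^i(p)\to 0$ via the Chernoff bound of Corollary~\ref{chernoffc}, where the paper simply notes that each fixed term $\binom{n}{i}p^i(1-p)^{n-i}$ is a polynomial in $n$ times an exponentially decaying factor, and that you write out the $a=\infty$ case (correctly using boundedness below), which the paper leaves to the reader. Both variants are sound.
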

\begin{proof}
The case $a=\infty$ is left for the reader, so suppose $a<\infty$. Take any $\epsilon>0$ and such $N$ that $|a_n-a|<\frac{\epsilon}{2}$ for all $n\geq N$. Then, for $n\geq N$,
\begin{align*}
 |a^*_n-a|&=\frac{1}{n+1}\left|\sum_{i=0}^n(a_i-a)\right|\\
&\leq \frac{1}{n+1}\sum_{i=0}^n|a_i-a|\\
&\leq \frac{1}{n+1}\sum_{i=0}^N|a_i-a|+\frac{1}{n+1}\cdot\frac{\epsilon }{2}(n-N).
\end{align*}
The last line converges to $\frac{\epsilon}{2}$ when $n$ goes to infinity, which implies that  $\{a^*_n\}_{n\geq 0}$ converges to $a$.

Similarly, for $n\geq N$,
\begin{align*}
 |a^p_n-a|&=\left|\sum_{i=0}^n\binom{n}{i}p^i(1-p)^{n-i}(a_i-a)\right|\\
&\leq \sum_{i=0}^N\binom{n}{i}p^i(1-p)^{n-i}|a_i-a|+\frac{\epsilon }{2}\sum_{i=N+1}^n\binom{n}{i}p^i(1-p)^{n-i}\\
&\leq  \sum_{i=0}^N\binom{n}{i}p^i(1-p)^{n-i}|a_i-a|+\frac{\epsilon }{2}.
\end{align*}
The last line converges to $\frac{\epsilon}{2}$ because $\binom{n}{i}$ grows as a polynomial in $n$ for each fixed value $i\leq N$ and $p^i(1-p)^{n-i}$ decreases exponentially. This implies that  $\{a^p_n\}_{n\geq 0}$ also converges to $a$.\qed
\end{proof}

One does not need to go searching for strange examples to see that convergence of $\{a^*_n\}_{n\geq 0}$ or $\{a^p_n\}_{n\geq 0}$ does not imply the convergence of $\{a_n\}_{n\geq 0}$. We state this as a proposition.

\begin{proposition}
	\label{prviNeg}
There exists a sequence $\{a_n\}_{n\geq 0}$ of zeros and ones that does not converge, whereas $\{a^*_n\}_{n\geq 0}$ and $\{a^p_n\}_{n\geq 0}$ converge for all $0<p<1$.
\end{proposition}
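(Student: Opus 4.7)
The plan is to exhibit an explicit and maximally simple witness. I would take the alternating sequence $a_n = n \bmod 2$, i.e. $0,1,0,1,\ldots$, which is visibly a $\{0,1\}$-valued sequence that fails to converge.

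For the Cesàro mean, a direct count gives $\sum_{i=0}^{n} (i\bmod 2) = \lceil n/2\rceil$, so $a^*_n = \lceil n/2\rceil/(n+1)$, which tends to $\tfrac12$. Nothing more delicate is needed here.

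The one mildly interesting step is the binomial mean, where I would exploit the generating-function identity
$$\sum_{i=0}^{n}\binom{n}{i}p^i(1-p)^{n-i}x^i = \bigl(px+(1-p)\bigr)^n.$$
Evaluating at $x=-1$ yields $\sum_{i=0}^{n}\binom{n}{i}p^i(1-p)^{n-i}(-1)^i = (1-2p)^n$. Since $(-1)^i = 1 - 2(i\bmod 2)$, this rearranges to
$$a^p_n \;=\; \frac{1-(1-2p)^n}{2}.$$
For every $p\in(0,1)$ we have $|1-2p|<1$, so $a^p_n \to \tfrac12$.

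There is no real obstacle: the proof reduces to recognizing that the alternating sequence has closed-form Cesàro and binomial means, both tending to $\tfrac12$. The only thing worth being careful about is making sure the identity at $x=-1$ is applied with the correct sign convention so that the limit formula $(1-(1-2p)^n)/2$ comes out right.
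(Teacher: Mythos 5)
Your proposal is correct and is essentially the paper's own argument: the same alternating $\{0,1\}$ sequence (up to swapping which parity gets the $1$), the same elementary Ces\`aro computation, and the same key identity $\sum_{i=0}^{n}\binom{n}{i}p^i(1-p)^{n-i}(-1)^i=(1-2p)^n$, which the paper phrases as the difference of the even-index and odd-index binomial sums. The closed form $a^p_n=\bigl(1-(1-2p)^n\bigr)/2$ is just a slightly more explicit packaging of the paper's conclusion.
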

\begin{proof}
Define
\begin{align*}
a_n =
\left\{
	\begin{array}{ll}
		0& \mbox{if } n\mbox{ is odd,} \\
		1& \mbox{if } n\mbox{ is even.}
	\end{array}
\right.
\end{align*}
Then  $\{a_n\}_{n\geq 0}$ does not converge and $\{a^*_n\}_{n\geq 0}$ converges to $\frac{1}{2}$, as the reader can verify. Next, we will prove that $\{a^p_n\}_{n\geq 0}$ converges to $\frac{1}{2}$. First, we see that, for $0<p<1$, the value of $(1-2p)$ is strictly between $-1$ and $1$, thus $(1-2p)^n$ converges to 0 when $n$ goes to infinity. Hence,
\begin{align*}
\sum_{i\textrm{ is even}}^n \binom{n}{i}p^i(1-p)^{n-i}-\sum_{i\textrm{ is odd}}^n \binom{n}{i}p^i(1-p)^{n-i}=(-p+(1-p))^n
\end{align*}
 converges to 0. Because
\begin{align*}
\sum_{i\textrm{ is even}}^n \binom{n}{i}p^i(1-p)^{n-i}+\sum_{i\textrm{ is odd}}^n \binom{n}{i}p^i(1-p)^{n-i}=1,
\end{align*}
we have that $\{a^p_n\}_{n\geq 0}$ converges to $\frac{1}{2}$.\qed
\end{proof}

\section{Comparing the limits of binomial means}
	\label{compareB}

In this section we compare the limits of sequences $\{a_n^p\}_{n\geq 0}$ for different parameters $p\in(0,1)$. We will see that if $0<p\leq q<1$, then the convergence of $\{a_n^q\}_{n\geq 0}$ implies the convergence of $\{a_n^p\}_{n\geq 0}$ to the same limit, while the convergence of $\{a_n^p\}_{n\geq 0}$ does not imply the convergence of $\{a_n^q\}_{n\geq 0}$ in general. We leave as an open problem whether for $a_n\geq 0$ it does.

First, let us prove the main lemma in this section, which tells us that the sequence of $q$-binomial means of the sequence of $p$-binomial means of some sequence is the sequence of $(pq)$-binomial means of the starting sequence.
\begin{lemma}
	\label{navzdol}
For $0<p,q<1$ and for a sequence $\{a_n\}_{n\geq 0}$, let $\{b_n\}_{n\geq 0}$ be a sequence of $p$-binomial means of $\{a_n\}_{n\geq 0}$, \ie $b_n=a_n^p$ for all $n$. Then $b_n^q=a_n^{pq}$ for all $n$.
\end{lemma}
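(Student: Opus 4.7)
The plan is to unfold the definitions, interchange the order of summation, and recognize the inner sum as a binomial expansion.

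Starting from
\[
b_n^q = \sum_{j=0}^n \binom{n}{j} q^j (1-q)^{n-j}\, b_j
      = \sum_{j=0}^n \binom{n}{j} q^j (1-q)^{n-j}\sum_{i=0}^j \binom{j}{i} p^i (1-p)^{j-i} a_i,
\]
I would swap the two sums so that $a_i$ can be collected, writing $b_n^q = \sum_{i=0}^n c_{n,i}\,a_i$ where
\[
c_{n,i} = \sum_{j=i}^n \binom{n}{j}\binom{j}{i}\, p^i (1-p)^{j-i}\, q^j (1-q)^{n-j}.
\]

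The key simplification is the standard identity $\binom{n}{j}\binom{j}{i} = \binom{n}{i}\binom{n-i}{j-i}$, which pulls a factor of $\binom{n}{i}$ outside. After substituting $k = j-i$ and separating the $p^i q^i$ factor, the remaining sum becomes
\[
\sum_{k=0}^{n-i} \binom{n-i}{k}\bigl[q(1-p)\bigr]^k (1-q)^{n-i-k},
\]
which the binomial theorem evaluates as $\bigl[q(1-p)+(1-q)\bigr]^{n-i} = (1-pq)^{n-i}$. Therefore $c_{n,i} = \binom{n}{i}(pq)^i(1-pq)^{n-i}$, and $b_n^q = a_n^{pq}$ as claimed.

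There are no real obstacles here; the only thing to be careful about is the bookkeeping of summation limits when exchanging the order (ensuring $j \geq i$ after the swap) and the algebraic cancellation $q - pq + 1 - q = 1 - pq$ that makes the final binomial work out cleanly. A slicker but less self-contained alternative would be to interpret both sides probabilistically: if $Y \mid X \sim \mathrm{Bin}(X,p)$ with $X \sim \mathrm{Bin}(n,q)$, then $Y \sim \mathrm{Bin}(n,pq)$ by a two-stage coin-flip argument, so $b_n^q = \mathbb{E}[b_X] = \mathbb{E}[\mathbb{E}[a_Y \mid X]] = \mathbb{E}[a_Y] = a_n^{pq}$. I would go with the direct calculation since it matches the combinatorial style of the preceding sections.
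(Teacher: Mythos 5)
Your proof is correct and follows essentially the same route as the paper's: exchange the order of summation, apply the identity $\binom{n}{j}\binom{j}{i}=\binom{n}{i}\binom{n-i}{j-i}$, substitute $k=j-i$, and evaluate the inner sum via the binomial theorem as $\bigl(q(1-p)+(1-q)\bigr)^{n-i}=(1-pq)^{n-i}$. The probabilistic aside (thinned binomial) is a nice sanity check but is not needed; the direct computation you chose is exactly what the paper does.
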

\begin{proof}
We can change the order of summation, consider $\binom{j}{i}\binom{n}{j}=\binom{n}{i}\binom{n-i}{j-i}$ for $i\leq j$ and put $k=j-i$:
\begin{align*}
b_n^q
&=\sum_{j=0}^n a_j^p\binom{n}{j}q^j(1-q)^{n-j}\\
&=\sum_{j=0}^n\sum_{i=0}^j a_i\binom{j}{i}\binom{n}{j}p^i(1-p)^{j-i}q^j(1-q)^{n-j}\\
&=\sum_{i=0}^n a_i  \binom{n}{i}p^iq^i \sum_{j=i}^n \binom{n-i}{j-i} (1-p)^{j-i}q^{j-i}(1-q)^{n-j}\\
&=\sum_{i=0}^n a_i  \binom{n}{i}p^iq^i \sum_{k=0}^{n-i} \binom{n-i}{k} ((1-p)q)^{k}(1-q)^{n-i-k}\\
&=\sum_{i=0}^n a_i  \binom{n}{i}p^iq^i ((1-p)q+(1-q))^{n-i}\\
&=\sum_{i=0}^n a_i  \binom{n}{i}(pq)^i (1-pq)^{n-i}.
\end{align*}
The last line equals $a_n^{pq}$.\qed
\end{proof}
The next theorem will now be trivial to prove.
\begin{theorem}
	\label{pq}
For $0<p<q<1$ and for a sequence $\{a_n\}_{n\geq 0}$, if $\{a_n^q\}_{n\geq 0}$ converges to $a\in\RR\cup\{\infty\}$, then $\{a_n^p\}_{n\geq 0}$ also converges to $a$.
\end{theorem}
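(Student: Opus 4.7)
The plan is to combine Lemma~\ref{navzdol} with Theorem~\ref{enost} in a straightforward way. Given $0 < p < q < 1$, the key observation is that $p$ factors as $p = q \cdot r$ where $r := p/q \in (0,1)$. So the $p$-binomial means can be expressed as iterated binomial means: first take $q$-binomial means, then take $r$-binomial means of those.

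More precisely, I would set $b_n := a_n^q$ and apply Lemma~\ref{navzdol} with the pair $(q, r)$ in place of $(p, q)$. This yields $b_n^{r} = a_n^{qr} = a_n^{p}$. In other words, the sequence $\{a_n^p\}_{n\geq 0}$ coincides with the sequence of $r$-binomial means of $\{b_n\}_{n\geq 0} = \{a_n^q\}_{n\geq 0}$.

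Now the hypothesis says $\{b_n\}_{n\geq 0}$ converges to $a \in \RR \cup \{\infty\}$. By Theorem~\ref{enost} applied to the sequence $\{b_n\}_{n\geq 0}$ with parameter $r \in (0,1)$, the sequence $\{b_n^r\}_{n\geq 0}$ converges to the same limit $a$. Since $b_n^r = a_n^p$, this is exactly the conclusion we want.

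There is no real obstacle here: the content is entirely packaged in Lemma~\ref{navzdol}, which reduces the problem to the already-established fact (Theorem~\ref{enost}) that convergence of a sequence implies convergence of its binomial means. The only minor thing to note is that the result holds uniformly for both finite and infinite $a$, which is handled by the corresponding cases of Theorem~\ref{enost}.
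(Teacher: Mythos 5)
Your proposal is correct and is essentially identical to the paper's own proof: the paper likewise writes $\{a_n^p\}_{n\geq 0}$ as the sequence of $\frac{p}{q}$-binomial means of $\{a_n^q\}_{n\geq 0}$ via Lemma~\ref{navzdol} and then invokes Theorem~\ref{enost}. No issues.
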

\begin{proof}
From Lemma~\ref{navzdol} we know that $\{a_n^p\}_{n\geq 0}$ is the sequence of $\frac{p}{q}$-binomial means of the sequence $\{a_n^q\}_{n\geq 0}$. By Theorem~\ref{enost}, it converges to $a$.\qed
\end{proof}
The next proposition tells us, that the condition $0<p<q<1$ in the above theorem cannot be left out in general.
\begin{proposition}
	\label{counterpq}
For $0<p<q<1$, there exists a sequence $\{a_n\}_{n\geq 0}$, such that $\{a_n^p\}_{n\geq 0}$ converges to 0, but $\{a_n^q\}_{n\geq 0}$ does not converge.
\end{proposition}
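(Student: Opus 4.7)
Because each binomial mean is defined by an expression that, for $a_n = r^n$, collapses under the binomial theorem, the plan is to search for a counterexample among geometric sequences. Concretely, if $a_n = r^n$ then
\[
a^p_n = (pr + 1 - p)^n, \qquad a^q_n = (qr + 1 - q)^n,
\]
so the whole proposition reduces to choosing a single real parameter $r$.

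The choice I would make is $r = 1 - 2/q$, which is the unique $r$ satisfying $qr + 1 - q = -1$; since $0 < q < 1$ one has $r < -1$. With this $r$, the formula above immediately gives $a^q_n = (-1)^n$, which does not converge, so one half of the statement is free. For the other half, substituting $r = 1 - 2/q$ into $pr + 1 - p$ yields $1 - 2p/q$. The hypothesis $0 < p < q$ gives $0 < 2p/q < 2$, hence $-1 < 1 - 2p/q < 1$, and therefore $a^p_n = (1 - 2p/q)^n \to 0$, as required.

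I do not anticipate a real obstacle: once one thinks of using a geometric sequence, the argument is an elementary calculation. The only noteworthy point is that the hypothesis $p < q$ enters in exactly one place, namely to force $|1 - 2p/q| < 1$; this is also why the same construction cannot work at $p = q$, which is consistent with the trivial implication on the diagonal of Table~\ref{tabela}.
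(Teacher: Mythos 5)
Your proposal is correct and follows essentially the same route as the paper: both take a geometric sequence $a_n=r^n$ with $r<-1$ and use the binomial theorem to collapse $a^p_n$ to $(p(r-1)+1)^n$. The paper picks any $r$ with $p<\frac{2}{1-r}<q$, while you take the boundary choice $r=1-2/q$ so that $a^q_n=(-1)^n$ exactly; this is just a concrete instance of the same construction and the verification is sound.
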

\begin{proof}
Define $\{a_n\}_{n\geq 0}$ as $a_n=a^n$ for some parameter $a\in\RR$. If $a>-1$, $\{a_n\}_{n\geq 0}$ converges (possibly to $\infty$), so let us examine the case when $a\leq -1$. In this case we have $$a_n^p=\sum_{i=0}^n\binom{n}{i} a^ip^i(1-p)^{n-i}=(ap+(1-p))^n=(p(a-1)+1)^n,$$
which converges iff $p<\frac{2}{1-a}$. So we can choose such an $a$ that $p<\frac{2}{1-a}<q$, \ie $1-\frac{2}{p}<a<1-\frac{2}{q}$. It follows that $\{a_n^p\}_{n\geq 0}$ converges to 0, but $\{a_n^q\}_{n\geq 0}$ does not converge.\qed
\end{proof}
The sequence $\{a_n\}_{n\geq 0}$ in the above proof is growing very rapidly by absolute value and the sign of its elements alternates. We think that this is not a coincidence and we state the following open problem.
\begin{open}
	\label{domneva}
Let $\{a_n\}_{n\geq 0}$ be a sequence of non-negative real numbers. Is it true that, for all $0<p,q<1$, the sequence $\{a^p_n\}_{n\geq 0}$ converges to $a$ iff $\{a^q_n\}_{n\geq 0}$ converges to $a$? If the answer is no, is there a counterexample where $a_n\in\{0,1\}$?
\end{open}
Note that the condition $a_n\geq 0$ is also required for the main result of the paper, Theorem~\ref{mainThm}. If the answer on~\ref{domneva} was \emph{yes}, then we would only have to prove Theorem~\ref{mainThm} in a special case, \eg for $p=\frac{1}{2}$. The (possibly negative) answer would also make this paper more complete (see Table~\ref{tabela}). For the rest of this section we will try to give some insight into this problem and we will present some reasons for why we think it is hard.

Suppose we have $0<p<q<1$ and a sequence $\{a_n\}_{n\geq 0}$ of non-negative real numbers such that $\{a^p_n\}_{n\geq 0}$ converges to $a\in\RR$ (the case when $\{a^q_n\}_{n\geq 0}$ converges is covered by Theorem~\ref{pq}). 
The next lemma implies that $\{a_n\}_{n\geq 0}$ has a relatively low upper bound on how fast its elements can increase, ruling out too large local extremes.
\begin{lemma}
	\label{koren}
Let $\{a_n\}_{n\geq 0}$ be a sequence of non-negative real numbers and let  $0<p<1$. If $\{a^p_n\}_{n\geq 0}$ converges to $a<\infty$, then $a_n=\OO(\sqrt{n})$.
\end{lemma}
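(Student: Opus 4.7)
The plan is to exploit the fact that every term in the sum defining $a_n^p$ is non-negative, so each individual term is bounded by the whole sum. Since $\{a_n^p\}_{n\geq 0}$ converges, it is bounded by some constant $M$, and for every $n\geq i$ we obtain
\[
B_n^i(p)\, a_i \;\leq\; \sum_{j=0}^n B_n^j(p)\, a_j \;=\; a_n^p \;\leq\; M,
\]
hence $a_i \leq M / B_n^i(p)$. This holds for every admissible $n$, so to get the tightest bound on $a_i$ we should pick $n$ so that $B_n^i(p)$ is as large as possible, namely so that $i$ lies at the peak of the binomial distribution.

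Concretely, I would choose $n = \lceil i/p\rceil$. A one-line check gives $\lfloor np\rfloor = i$: we have $np \geq i$ and $np < i+p \leq i+1$. Applying Lemma~\ref{osrednji} to this choice of $n$ yields
\[
B_n^i(p) \;=\; B_n^{\lfloor np\rfloor}(p) \;\sim\; \frac{1}{\sqrt{2\pi p(1-p)\,n}}
\]
as $i\to\infty$ (and therefore $n\to\infty$). In particular there exists a constant $c>0$ (depending only on $p$) and an index $i_0$ such that $B_n^i(p) \geq c/\sqrt{n}$ for all $i\geq i_0$.

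Combining the two displays, for $i\geq i_0$,
\[
a_i \;\leq\; \frac{M}{B_n^i(p)} \;\leq\; \frac{M\sqrt{n}}{c} \;\leq\; \frac{M}{c}\sqrt{\tfrac{i}{p}+1} \;=\; \OO(\sqrt{i}),
\]
since $n = \lceil i/p\rceil \leq i/p + 1$. This gives the claimed bound $a_i = \OO(\sqrt{i})$, and the finitely many values $a_i$ with $i < i_0$ do not affect the asymptotic estimate.

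The only delicate point is the choice of $n$ matched to $i$ so that the peak estimate from Lemma~\ref{osrednji} actually applies; everything else is routine. The non-negativity hypothesis is essential since it is what allows us to drop all but one term of the sum in the very first step.
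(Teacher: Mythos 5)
Your proof is correct and follows essentially the same route as the paper's: bound a single term $B_n^i(p)\,a_i$ by the whole (non-negative) sum and invoke Lemma~\ref{osrednji} to lower-bound the peak weight by $\TT(1/\sqrt{n})$. In fact your version is slightly more careful than the paper's two-line sketch, since you explicitly choose $n=\lceil i/p\rceil$ to guarantee that \emph{every} index $i$ is realized as $\lfloor np\rfloor$, a detail the paper glosses over.
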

\begin{proof}
We know that $a_n^p\geq a_{\lfloor np\rfloor} B_n^{\lfloor np\rfloor}(p)$, where $B_n^{\lfloor np\rfloor}(p)\approx \frac{1}{\sqrt{2\pi p(1-p)n}}$ by Lemma~\ref{osrednji} and $a_n^p\approx a$ for large $n$. Hence, $a_{\lfloor np\rfloor}=\OO(\sqrt{n})$.\qed
\end{proof}

To see whether $\{a^q_n\}_{n\geq 0}$ converges, it makes sense to compare $a_{\lfloor n/p\rfloor}^p$ with $a_{\lfloor n/q\rfloor}^q$, since the peaks of the ``weights'' $B_{\lfloor n/p\rfloor}^i(p)$ and $B_{\lfloor n/q\rfloor}^i(q)$ (roughly) coincide by $n$ (see Fig~\ref{slikaP}). 
Now the troublesome thing is that, for large $n$, the peaks are not of the same height, but rather they differ for a factor $$\sqrt{\frac{1-q}{1-p}}$$ by Lemma~\ref{osrednji}. Because the weights $B_{\lfloor n/p\rfloor}^i(p)$ and $B_{\lfloor n/q\rfloor}^i(q)$ are (really) influential only in the $\OO(\sqrt{n})$ neighborhood of $n$ (Corollary~\ref{chernoffc} and Lemma~\ref{okolicentra}), where $p$-weights are only a bit ``downtrodden'' $q$-weights, it seems that the convergence of $\{a^p_n\}_{n\geq 0}$ could imply the convergence of $\{a^q_n\}_{n\geq 0}$. 

\begin{figure}[hbt]
\centering\includegraphics[width=8cm]{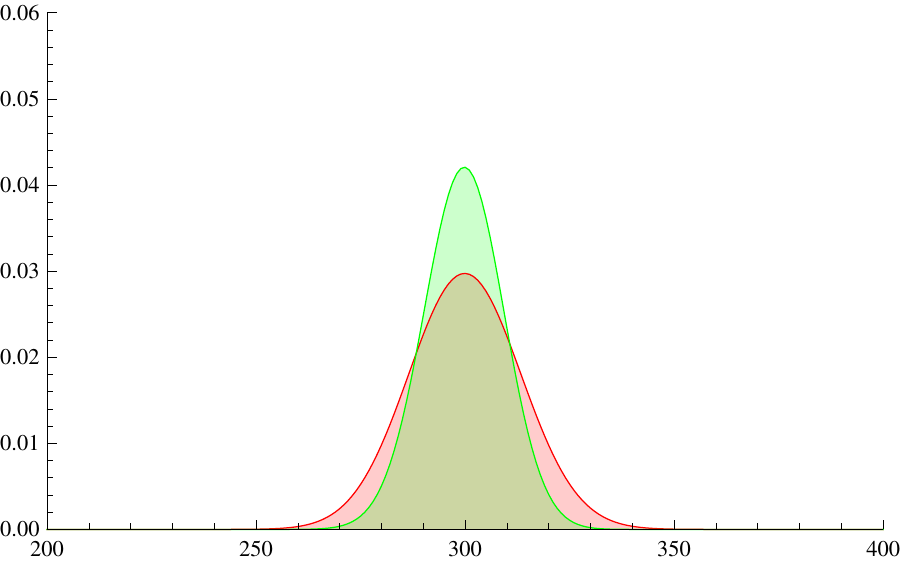}
\caption{The graphs show $B_{\lfloor n/p\rfloor}^i(p)$ and $B_{\lfloor n/q\rfloor}^i(q)$ with respect to $i$ in the neighborhood of $n$ for $n=300$, $p=0.4$ (red) and $q=0.7$ (green).}
	\label{slikaP}
\end{figure}

On the other hand, one could take $a_n=0$ for all except for some $n$ where there would be outliers of heights $\TT(\sqrt{n})$. Those outliers would be so far away that the weights $B_{n}^i(p)$ could ``notice'' two consecutive outliers, while the weights $B_{n}^i(q)$, which are slimmer, could not (on Fig.~\ref{slikaP}, two outliers could be at 280 and 320). Then $\{a_n^p\}_{n\geq 0}$ could converge because there would be a small difference between [when the weights $B_{n}^i(p)$ amplify one outlier] and [when they ``notice'' two outliers] (this two events seem to be the most opposite).  On the other hand, $\{a_n^q\}_{n\geq 0}$ would not converge.  From Chernoff bound (Corollary~\ref{chernoffc}) and from 
Lemma~\ref{okolicentra} it follows that the (horizontal) distance between outliers should be roughly $C\sqrt{n}$ for some $C$. What $C$ would be most appropriate?

\section{Comparing the limit of binomial means and the limit of arithmetic means}
	\label{compare}

This section contains the main result of this paper, which is formulated in the next theorem. The proof will be given in later.
\begin{theorem}
	\label{mainThm}
Let $\{a_n\}_{n\geq 0}$ be a sequence of non-negative 
real numbers such that $\{a^p_n\}_{n\geq 0}$ converges to $a\in\RR\cup\{\infty\}$ for some $0<p<1$. Then $\{a^*_n\}_{n\geq 0}$ converges to $a$.
\end{theorem}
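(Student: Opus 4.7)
My strategy is to apply a Ces\`aro average to the hypothesis, swap the order of summation to obtain information about the partial sums $F(M):=\sum_{i=0}^M a_i$, and finally change variables $M=\lfloor pN\rfloor$. By Theorem~\ref{enost} applied to the convergent sequence $\{a_n^p\}_{n\ge 0}$, we have $\frac{1}{N+1}\sum_{n=0}^N a_n^p\to a$. Interchanging summations gives
\[
\frac{1}{N+1}\sum_{n=0}^N a_n^p=\frac{1}{N+1}\sum_{i=0}^N a_i\, W_i^{(N)},\qquad W_i^{(N)}:=\sum_{n=i}^N\binom{n}{i}p^i(1-p)^{n-i}.
\]
A short probabilistic rewrite (using that $\binom{n}{i}p^{i+1}(1-p)^{n-i}$ is the probability that the $(i+1)$-th success in Bernoulli$(p)$ trials occurs at trial $n+1$) identifies $W_i^{(N)}=\frac{1}{p}\Pr[\var{Y}_{N+1}\ge i+1]$ with $\var{Y}_{N+1}\sim\mathrm{Bin}(N+1,p)$. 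Thus $W_i^{(N)}\in[0,1/p]$, with $W_i^{(N)}\approx 1/p$ for $i\ll pN$ and $W_i^{(N)}\approx 0$ for $i\gg pN$, the transition happening on a window $W:=[pN-C\sqrt N,\,pN+C\sqrt N]$ of width $\OO(\sqrt N)$. If the weighted sum is asymptotic to $\frac{1}{p(N+1)}F(\lfloor pN\rfloor)$, then $a^*_{\lfloor pN\rfloor}\to a$, and since $0<p<1$ guarantees that $\lfloor pN\rfloor$ attains every sufficiently large non-negative integer as $N$ ranges over $\NN$, this transfers to $a^*_M\to a$.

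\textbf{Case $a<\infty$.} The difference between the weighted sum and $\frac{1}{p(N+1)}F(\lfloor pN\rfloor)$ decomposes into contributions from $i\notin W$ and $i\in W$. Corollary~\ref{chernoffc} bounds the outside contribution by $\frac{2e^{-C^2/(3p)}}{p(N+1)}F(N)=\OO(e^{-C^2/(3p)})$, where the crude estimate $F(N)=\OO(N)$ is obtained by truncating the weighted sum at $i\le pN/2$ (on which $W_i^{(N)}\ge 1/(2p)$) and using that the weighted sum itself is $\OO(N)$. The key nontrivial input is the estimate
\[
\sum_{i\in W}a_i\le C'\sqrt N,
\]
which follows from $a_i\ge 0$ together with Lemmas~\ref{osrednji} and~\ref{okolicentra}: these show $B_N^i(p)\ge c/\sqrt N$ uniformly on $W$, so $a_N^p\ge (c/\sqrt N)\sum_{i\in W}a_i$, and the bound follows since $\{a_N^p\}$ is bounded. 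Hence the inside contribution is $\OO(1/\sqrt N)$, and letting $C\to\infty$ first and then $N\to\infty$ drives the total error to zero.

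\textbf{Case $a=\infty$, and main obstacle.} When $a=\infty$ a one-sided argument suffices: Theorem~\ref{enost} gives $\frac{1}{N+1}\sum_{n=0}^N a_n^p\to\infty$, so $\sum_i a_i W_i^{(N)}\ge K(N+1)$ eventually for any $K$; combined with $W_i^{(N)}\le 1/p$, this forces $F(N)\ge pK(N+1)$, hence $a^*_N\to\infty$. The real difficulty of the whole proof is the $\OO(\sqrt N)$ bound on $\sum_{i\in W}a_i$ in the finite case: a naive application of Lemma~\ref{koren} (which only yields $a_i=\OO(\sqrt i)$ pointwise) would produce $\OO(N)$ over $W$, which is insufficient after division by $N$. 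The sharpening exploits the genuinely Gaussian local shape of the binomial weights over the full window $W$, not pointwise information about individual $a_i$'s.
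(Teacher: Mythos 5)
Your proof is correct and shares the paper's overall skeleton: apply Theorem~\ref{enost} to pass to the Ces\`{a}ro means of $\{a_n^p\}_{n\geq 0}$, interchange the two summations, identify the resulting weights as $\frac{1}{p}\Pr[\mathrm{Bin}(N+1,p)\geq i+1]$ (this is exactly the paper's Lemma~\ref{utezi}, which you rederive more slickly via the negative binomial interpretation), and split the sum by the distance of $i$ from $pN$. Where you genuinely diverge is in the quantitative execution of the split, and your choices simplify two of the three zones. The paper uses a transition window of half-width $\epsilon(N)=\sqrt{N}\log N$, which makes the outer weight discrepancies super-polynomially small but costs two extra ingredients: Lemma~\ref{koren} (the pointwise bound $a_i=\OO(\sqrt i\,)$) to control the right tail, and the block-partition argument of Lemma~\ref{ay} (cutting the window into segments of length $\log^2 N$, each contributing $\OO(\sqrt N)$, for a total of $\OO(N/\log N)$). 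You instead fix a constant $C$ and take half-width $C\sqrt N$; the window then needs only a single application of Lemmas~\ref{osrednji} and~\ref{okolicentra} to give $\sum_{i\in W}a_i=\OO(\sqrt N)$ (with a constant depending on $C$), and both tails are absorbed by the crude estimate $F(N)=\OO(N)$, which you cleverly extract from the weighted sum itself rather than from Lemma~\ref{koren}. The price is a two-parameter limit instead of a single one.

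One slip to repair: you write ``letting $C\to\infty$ first and then $N\to\infty$,'' but the order must be the reverse. The constant in your window estimate grows like $e^{C^2/(p(1-p))}$ (from Lemma~\ref{okolicentra} at offset $C\sqrt N$), so for fixed $C$ you first send $N\to\infty$ to kill the inside contribution, concluding that the limsup of the total error is $\OO(e^{-C^2/(3p)})$, and only then send $C\to\infty$. With that order corrected (and a remark that Lemma~\ref{okolicentra} is applied uniformly over all offsets up to $C\sqrt N$, which is the same license the paper takes inside Lemma~\ref{ay}), your argument is complete; the $a=\infty$ case and the transfer from $a^*_{\lfloor pN\rfloor}$ to $a^*_M$ coincide with the paper's.
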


An example of how this theorem can be used is given in Section~\ref{uporaba}. Here we give an example where $\{a^p_n\}_{n\geq 0}$ converges to $a\in\RR\cup\{\infty\}$ for all $0<p<1$, but $\{a^*_n\}_{n\geq 0}$ does not converge. 

\begin{proposition}
For the sequence $\{a_n\}_{n\geq 0}$ given by $a_n=(-1)^nn$, $\{a^p_n\}_{n\geq 0}$ converges to 0 for all $0<p<1$ and $\{a^*_n\}_{n\geq 0}$ does not converge.
\end{proposition}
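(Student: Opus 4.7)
The plan is to handle the two claims separately, since both reduce to explicit closed-form computations.

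For the arithmetic means, I would directly evaluate $S_n := \sum_{i=0}^n (-1)^i i$ by splitting on the parity of $n$. For $n=2k$, pairing consecutive terms from the right gives $S_{2k} = k$; for $n=2k+1$, one further has $S_{2k+1} = k - (2k+1) = -k-1$. Dividing by $n+1$ yields $a^*_{2k} = k/(2k+1) \to 1/2$ and $a^*_{2k+1} = -(k+1)/(2k+2) = -1/2$, so $\{a^*_n\}_{n\geq 0}$ has two distinct subsequential limits and cannot converge.

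For the binomial means, the key idea is to recognize $a^p_n$ as (the negative of) the derivative of a generating function evaluated at $x=-1$. Starting from the identity
\begin{equation*}
\sum_{i=0}^n \binom{n}{i}p^i(1-p)^{n-i}x^i = \bigl(1-p+px\bigr)^n,
\end{equation*}
differentiating both sides with respect to $x$ and then substituting $x=-1$ gives
\begin{equation*}
\sum_{i=0}^n \binom{n}{i}p^i(1-p)^{n-i}\,i\,(-1)^{i-1} = np\bigl(1-2p\bigr)^{n-1}.
\end{equation*}
Multiplying by $-1$ yields the clean closed form $a^p_n = -np(1-2p)^{n-1}$.

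Finally, I would conclude by noting that for every $0<p<1$ we have $|1-2p|<1$, so the factor $(1-2p)^{n-1}$ decays geometrically, dominating the linear factor $np$; hence $a^p_n \to 0$. The degenerate case $p=1/2$ is even simpler: $a^p_n=0$ for all $n\geq 1$. There is no real obstacle here; the only thing to be careful about is keeping the sign bookkeeping straight when differentiating and evaluating at $x=-1$, and treating $p=1/2$ as a separate (trivial) case so that one does not inadvertently write $0^0$.
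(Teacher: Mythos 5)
Your proof is correct and takes essentially the same route as the paper: differentiating $(1-p+px)^n$ and evaluating at $x=-1$ is just the identity $i\binom{n}{i}=n\binom{n-1}{i-1}$ in disguise, and both arguments land on the closed form $a^p_n=-np(1-2p)^{n-1}$, while your explicit computation of $a^*_{2k}\to\frac{1}{2}$ and $a^*_{2k+1}=-\frac{1}{2}$ fills in exactly what the paper leaves to the reader. One trivial slip: for $p=\frac{1}{2}$ the means vanish only from $n\geq 2$ onward (indeed $a^{1/2}_1=-\frac{1}{2}$), but this has no bearing on the limit.
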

\begin{proof}
For any $0<p<1$, it holds
\begin{align*}
a^p_n&=\sum_{i=0}^n(-1)^ii\binom{n}{i}p^i(1-p)^{n-i}\\
&=-n\frac{p}{1-p}\sum_{i=1}^n (-1)^{i-1}\binom{n-1}{i-1}p^{i-1}(1-p)^{n-(i-1)}\\
&=-n\frac{p}{1-p}(-p+(1-p))^{n-1}.
\end{align*}
Because $(-p+(1-p))=(1-2p)$ is strictly between $-1$ and 1, $\{a^p_n\}_{n\geq 0}$ converges to 0.

However, the reader can verify that $a^*_{2n+1}=-\frac{1}{2}$ and $a_{2n}^*=\frac{n}{2n+1}$, which implies that $\{a^*_n\}_{n\geq 0}$ does not converge. \qed
\end{proof}

Next, we show that we cannot interchange $\{a^p_n\}_{n\geq 0}$ and $\{a^*_n\}_{n\geq 0}$ in Theorem~\ref{mainThm}.
\begin{proposition}
There exists a sequence $\{a_n\}_{n\geq 0}$ of zeros and ones such that $\{a^*_n\}_{n\geq 0}$ converges to 0 and $\{a^p_n\}_{n\geq 0}$ diverges for all $0<p<1$.
\end{proposition}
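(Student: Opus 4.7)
The strategy is to place disjoint blocks of ones at doubly-exponentially spaced positions, each long enough to swallow the bulk of a binomial distribution of the relevant size but short enough to vanish in Ces\`{a}ro average. Concretely, set $N_k=2^{2^k}$ and $L_k=\lceil k\sqrt{N_k}\,\rceil$, and define $a_i=1$ whenever $i\in[N_k,N_k+L_k]$ for some $k$, and $a_i=0$ otherwise.

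For the Ces\`{a}ro means: for $N_k\leq n<N_{k+1}$, the number of ones among $a_0,\ldots,a_n$ is at most $\sum_{j\leq k}(L_j+1)=\OO(L_k)$ (since $L_{j+1}/L_j\to\infty$), so dividing by $n+1\geq N_k$ gives $a^*_n=\OO(k/\sqrt{N_k})\to 0$. For divergence of the binomial means at a fixed $p\in(0,1)$, I will exhibit two subsequences with limits $1$ and $0$. Choose $n_k=\lfloor(N_k+L_k/2)/p\rfloor$, so that $pn_k$ lies at the middle of the $k$-th block; by Corollary~\ref{chernoffc}, for any $\epsilon>0$ all but $\epsilon$ of the mass of $B^i_{n_k}(p)$ sits in a window of radius $\OO(\sqrt{n_k})$ around $pn_k$, and since $L_k/\sqrt{n_k}=\TT(k\sqrt{p})\to\infty$ this window lies inside $[N_k,N_k+L_k]$ for $k$ large, so $a^p_{n_k}\to 1$. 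Next choose $m_k=\lfloor 2N_k/p\rfloor$, so that $pm_k\approx 2N_k$ falls just past block $k$ and extremely far from block $k+1$ (since $N_{k+1}=N_k^2\gg 2N_k$). The distance from $pm_k$ to block $k$ is $\OOm(N_k)$, which is $\OOm(\sqrt{pN_k})$ times $\sqrt{m_k}$, so Corollary~\ref{chernoffc} bounds the mass on block $k$ by $2e^{-\OOm(N_k)}$; the mass contributed by blocks $j\geq k+1$ is dominated by $\sum_{i\geq 2pm_k}B^i_{m_k}(p)$, exponentially small in $m_k$ by a standard Chernoff upper-tail bound; and blocks $j<k$ are even farther from $pm_k$ and are handled analogously. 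Hence $a^p_{m_k}\to 0$, and $\{a^p_n\}_{n\geq 0}$ diverges.

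The main obstacle is making a single sequence witness divergence for every $p\in(0,1)$ simultaneously, since the Chernoff radius carries $p$-dependent constants like $1/\sqrt{p}$ that blow up as $p\to 0^+$. The extra factor $k$ in $L_k=k\sqrt{N_k}$ provides exactly the slack needed: for any fixed $p$, the constants $1/\sqrt{p}$, $\sqrt{1-p}$ are absorbed once $k$ is large enough, so the same sequence works uniformly across all $0<p<1$. A secondary routine check is that the doubly-exponential spacing $N_{k+1}=N_k^2$ keeps consecutive blocks far enough apart that the ``between blocks'' indices $m_k$ are genuinely free of both $k$ and $k+1$ in standard-deviation units.
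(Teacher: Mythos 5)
Your construction is essentially the paper's own: sparse islets of ones of width $\omega(\sqrt{N})$ but $o(N)$ (the paper uses centres $2^{2k}$ with radius $2^{k}k$), with $a^p_n\to 1$ along indices where $pn$ sits mid-islet and $a^p_n\to 0$ along indices where $pn$ falls between islets, both justified by Corollary~\ref{chernoffc}; your doubly-exponential spacing and the factor $k$ in place of $\log N$ are immaterial variations. The argument is correct (just take $\delta<1$ strictly, e.g.\ $\delta=0.9$, when invoking the Chernoff bound for the blocks beyond $2pm_k$, since Theorem~\ref{chernoff} is stated only for $\delta\in(0,1)$).
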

\begin{proof}
Define
\begin{align*}
a_n =
\left\{
	\begin{array}{ll}
		1& \mbox{if there is some }k\in\NN \mbox{ such that } \left|n-2^{2k}\right|<2^kk\\
		0& \mbox{else.}
	\end{array}
\right.
\end{align*}
So $\{a_n\}_{n\geq 0}$ has islets of ones in the sea of zeros. The size of an islet at position $N$ is $\TT(\sqrt{N}\log(N))$ and the distance between two islets near position $N$ is $\TT(N)$. We leave to the reader to formally show that this implies the convergence of $a^*_n$ to zero.

Now let $0<p<1$. By Chernoff bound (Corollary~\ref{chernoffc}) we see that $B^i_n(p)$ is concentrated around $i=\lfloor np\rfloor$ and that for $|i-np|\geq\sqrt{n}\log(n)$, we have roughly nothing left. We leave to the reader to formally show that $\left\{a_{\lfloor2^{2k}/p\rfloor}^{p}\right\}_{k\geq 0}$ converges to 1 and that $\left\{a_{\lfloor2^{2k-1}/p\rfloor}^{p}\right\}_{k\geq 0}$ converges to 0, which implies that $\{a^p_n\}_{n\geq 0}$ diverges.\qed
\end{proof}

Now we go for the proof of Theorem~\ref{mainThm}. First, for a sequence $\{a_n\}_{n\geq 0}$ and $0<p<1$, we \textbf{define} $\{a_n^{p*}\}_{n\geq 0}$ as a sequence of arithmetic means of the sequence $\{a^p_n\}_{n\geq 0}$. We get
\begin{align*}
a_n^{p*}
&=\frac{1}{n+1}\sum_{j=0}^n a_j^p\\
&=\frac{1}{n+1}\sum_{j=0}^n\sum_{i=0}^j a_i\binom{j}{i}p^i(1-p)^{j-i}\\
&=\frac{1}{n+1}\sum_{i=0}^n a_i\sum_{j=i}^n\binom{j}{i}p^i(1-p)^{j-i}.
\end{align*}

It makes sense to \textbf{define} weights $w_n^i(p)=\sum_{j=i}^n\binom{j}{i}p^i(1-p)^{j-i},$ so that it holds
$$a_n^{p*}=\frac{1}{n+1}\sum_{i=0}^n w_n^i(p) a_i.$$

\begin{figure}[hbt]
\centering\includegraphics[width=8cm]{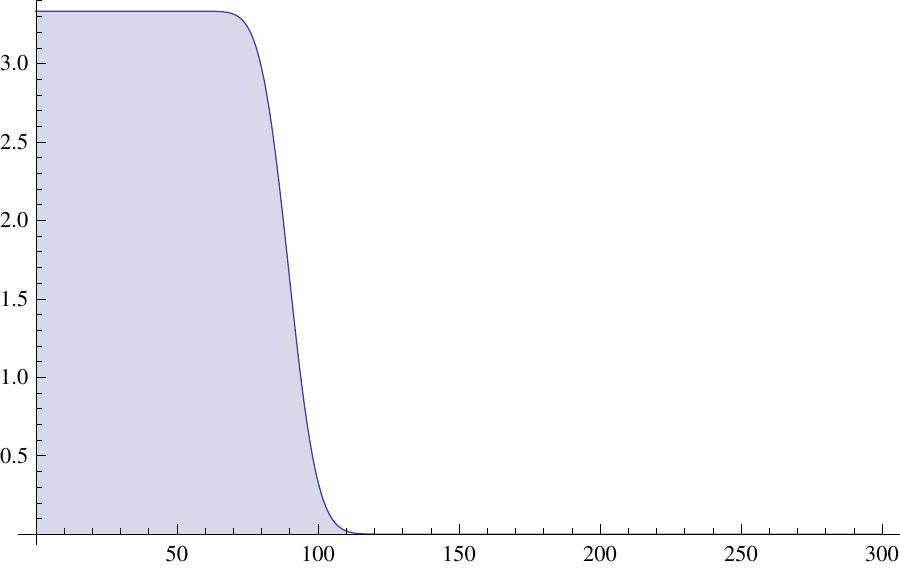}
\caption{The graph shows $w_{300}^i(0.3)$ with respect to $i$. We see a steep slope at $i=90$ reaching from height approximately $\frac{1}{0.3}$ to $0$.}
	\label{Wji}
\end{figure}

We can see on Fig.~\ref{Wji} that the weights $w_n^i(p)$ have a very specific shape. They are very close to $\frac{1}{p}$  for $i<np-\epsilon(n)$ and very close to 0 for $i>np+\epsilon(n)$, for some small $\epsilon(n)$. Such a shape can be well described using the next lemma (and its corollary), which gives another way to compute $w_n^i(p)$.
\begin{lemma}
	\label{utezi}
For $0<p<1$, $n\in\NN$ and $0\leq i\leq n$, it holds $$w_n^i(p)=\frac{1}{p}\left(1-\sum_{j=0}^i\binom{n+1}{j}p^j(1-p)^{n+1-j}\right).$$
\end{lemma}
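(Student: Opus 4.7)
The plan is to prove the identity by induction on $n$, for $n \geq i$ with $i$ fixed. Equivalently, using $\sum_{j=0}^{n+1}\binom{n+1}{j}p^j(1-p)^{n+1-j}=1$, the claim is
\begin{equation*}
p \cdot w_n^i(p) \;=\; \sum_{j=i+1}^{n+1}\binom{n+1}{j}p^j(1-p)^{n+1-j},
\end{equation*}
which is the familiar negative-binomial/binomial duality: the left side is $\Pr[T_{i+1}\leq n+1]$ where $T_{i+1}$ is the index of the $(i+1)$-st success in a Bernoulli$(p)$ sequence, and the right side is $\Pr[\var{X}\geq i+1]$ for $\var{X}\sim\mathrm{Binom}(n+1,p)$. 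I will stick with the direct inductive verification, since it is self-contained and avoids introducing extra machinery.

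For the base case $n=i$, the sum defining $w_i^i(p)$ has a single term $\binom{i}{i}p^i=p^i$, and the right-hand side simplifies to $\frac{1}{p}\binom{i+1}{i+1}p^{i+1}(1-p)^0=p^i$, so equality holds.

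For the inductive step, assume the identity for $n$ and prove it for $n+1$. From the definition,
\begin{equation*}
w_{n+1}^i(p) - w_n^i(p) \;=\; \binom{n+1}{i}p^i(1-p)^{n+1-i},
\end{equation*}
so it suffices to show that the right-hand side of the claim increases by exactly this amount when $n$ is replaced by $n+1$. Writing this difference out, we need
\begin{equation*}
\binom{n+1}{i}p^{i+1}(1-p)^{n+1-i} \;=\; \sum_{j=0}^{i} p^j(1-p)^{n+1-j}\Bigl[\binom{n+1}{j} - (1-p)\binom{n+2}{j}\Bigr].
\end{equation*}
Here the plan is to apply Pascal's identity $\binom{n+2}{j}=\binom{n+1}{j}+\binom{n+1}{j-1}$, which converts the bracket to $p\binom{n+1}{j}-(1-p)\binom{n+1}{j-1}$. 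The resulting sum then splits into two pieces that telescope: after reindexing $k=j-1$ in the second piece, all terms cancel except $\binom{n+1}{i}p^{i+1}(1-p)^{n+1-i}$, which is exactly what is needed.

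The only real obstacle is bookkeeping the index shifts correctly (in particular treating $j=0$ where $\binom{n+1}{-1}=0$, so the boundary terms vanish as desired). Otherwise the argument is entirely mechanical.
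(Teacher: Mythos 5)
Your proof is correct, but it takes a genuinely different route from the paper. The paper evaluates $w_n^i(p)=\sum_{j=i}^n\binom{j}{i}p^i(1-p)^{j-i}$ in a single closed-form computation: it recognizes the sum as $\frac{p^i}{i!}$ times the $i$-th derivative of the finite geometric series $\frac{1-x^{n+1}}{1-x}$ at $x=1-p$, then expands both factors as power series centered at $1-p$ and reads off the coefficient. Your argument instead fixes $i$ and inducts on $n$: the left side grows by the single term $\binom{n+1}{i}p^i(1-p)^{n+1-i}$, and you match this against the increment of the right side via Pascal's identity and a telescoping sum. I checked the details: the base case $n=i$ gives $p^i$ on both sides, the bracket $\binom{n+1}{j}-(1-p)\binom{n+2}{j}$ does reduce to $p\binom{n+1}{j}-(1-p)\binom{n+1}{j-1}$, and after the shift $k=j-1$ (with $\binom{n+1}{-1}=0$ killing the boundary term) everything cancels except the $j=i$ term $\binom{n+1}{i}p^{i+1}(1-p)^{n+1-i}$, exactly as required. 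Your approach is more elementary and avoids the formal power-series manipulation, at the cost of being a verification rather than a derivation; the paper's computation, by contrast, produces the closed form directly without having to guess it first. Your remark identifying the identity as the negative-binomial/binomial duality $\Pr[T_{i+1}\leq n+1]=\Pr[\var{X}\geq i+1]$ is a nice conceptual complement that neither proof strictly needs but that explains why the formula holds.
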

\begin{proof} The idea is to use power series centered at $(1-p)$. For a function $f:\RR\rightarrow\RR$, we will write $f^{(i)}:\RR\rightarrow\RR$ for its $i$-th derivative.
\begin{align*}
w_n^i(p)
&=\sum_{j=i}^n\binom{j}{i}p^i(1-p)^{j-i}\\
&=\left.\frac{p^i}{i!}\left(\sum_{j=0}^n x^j\right)^{(i)}\right\vert_{x=1-p}\\
&=\left.\frac{p^i}{i!}\left(\frac{1-x^{n+1}}{1-x}\right)^{(i)}\right\vert_{x=1-p}\\
&=\left.\frac{p^{i-1}}{i!}\left(\frac{1-\big(x-(1-p)+(1-p)\big)^{n+1}}{1-\frac{1}{p}(x-(1-p))}\right)^{(i)}\right\vert_{x=1-p}\\
&=\frac{p^{i-1}}{i!}\left[\left(1-\sum_{k=0}^{n+1}\binom{n+1}{k}(x-(1-p))^k(1-p)^{n+1-k}\right)\right.\\
&\left.\left.\cdot\left(\sum_{k=0}^{\infty}(x-(1-p))^kp^{-k}\right)\right]^{(i)}\right\vert_{x=1-p}\\
&=\frac{p^{i-1}i!}{i!}\left(p^{-i}-\sum_{j=0}^i\binom{n+1}{j}(1-p)^{n+1-j}p^{j-i}\right)\\
&=\frac{1}{p}\left(1-\sum_{j=0}^i\binom{n+1}{j}p^j(1-p)^{n+1-j}\right).&\qed
\end{align*}
\end{proof}
\textbf{Define} the function $\epsilon:\NN\rightarrow\RR^+$ as 
\begin{align*}
\epsilon(n)=
\left\{
	\begin{array}{ll}
		\sqrt{n}\log(n)& \mbox{if }n\geq 2\\
		1& \mbox{else.}
	\end{array}
\right.
\end{align*}
 Now the following corollary holds.
\begin{corollary}
	\label{mejaUtezi}
For $0<p<1$, $n\in\NN$ and $0\leq i\leq n$, it holds
\begin{align*}
w_n^{\lfloor np-\epsilon(n)\rfloor}(p)&\geq\frac{1}{p}-n^{-\TT(\log(n))},\\
w_n^{\lfloor np+\epsilon(n)\rfloor}(p)&\leq n^{-\TT(\log(n))}.
\end{align*}
\end{corollary}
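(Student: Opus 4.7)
The key idea is to interpret the formula from Lemma~\ref{utezi} probabilistically. Let $\var{X}$ denote a binomially distributed random variable with parameters $(n+1,p)$; then
$$w_n^i(p)=\frac{1}{p}\left(1-\Pr[\var{X}\leq i]\right)=\frac{1}{p}\Pr[\var{X}>i].$$
Hence the two inequalities of the corollary are equivalent to the two tail bounds
$$\Pr[\var{X}\leq\lfloor np-\epsilon(n)\rfloor]\leq p\cdot n^{-\TT(\log(n))}\qquad\text{and}\qquad\Pr[\var{X}>\lfloor np+\epsilon(n)\rfloor]\leq p\cdot n^{-\TT(\log(n))}.$$

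For the first tail, note that if $\var{X}\leq\lfloor np-\epsilon(n)\rfloor$, then $(n+1)p-\var{X}\geq p+\epsilon(n)\geq\epsilon(n)$, so in particular $|\var{X}-(n+1)p|\geq\epsilon(n)$. For the second tail, $\var{X}>\lfloor np+\epsilon(n)\rfloor$ forces $\var{X}\geq np+\epsilon(n)$, hence $\var{X}-(n+1)p\geq\epsilon(n)-p\geq\epsilon(n)/2$ for all sufficiently large $n$. Both tails are therefore controlled by $\Pr[|\var{X}-(n+1)p|\geq\epsilon(n)/2]$.

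Now I apply Corollary~\ref{chernoffc} to the binomial $(n+1,p)$ variable $\var{X}$ with the function $\alpha(n+1)=\epsilon(n)/(2\sqrt{n+1})=\TT(\log(n))$, which satisfies $\alpha(n+1)<p\sqrt{n+1}$ for large $n$ since $\log(n)=\oo(\sqrt{n})$. This yields
$$\Pr\bigl[|\var{X}-(n+1)p|\geq \epsilon(n)/2\bigr]\leq 2e^{-\alpha^2(n+1)/(3p)}=2e^{-\TT(\log^2 n)}=n^{-\TT(\log(n))},$$
which gives both desired bounds after dividing by $p$.

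The only obstacle is bookkeeping: dealing with the $\pm p$ correction coming from shifting between $np$ and $(n+1)p$, handling the floor functions, and checking that the constants absorbed into the $\TT$-notation are positive (which they are, since the exponent is essentially $\log^2(n)/(3p)$). Small $n$ (where $\epsilon(n)=1$) can be handled trivially since the bound $n^{-\TT(\log n)}$ is vacuous there; only the asymptotic behavior matters.
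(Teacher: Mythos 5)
Your proposal is correct and follows exactly the route the paper intends: its one-line proof says to apply the Chernoff bound (Corollary~\ref{chernoffc}) to the expression from Lemma~\ref{utezi}, which is precisely your probabilistic reading of $w_n^i(p)=\frac{1}{p}\Pr[\var{X}>i]$ for $\var{X}$ binomial with parameters $(n+1,p)$. You have simply filled in the bookkeeping (the shift from $np$ to $(n+1)p$, the floors, and the identification $e^{-\TT(\log^2 n)}=n^{-\TT(\log n)}$) that the paper leaves to the reader.
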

\begin{proof}
Use the Chernoff bound (Corollary~\ref{chernoffc}) on the expression for $w_n^i(p)$ from Lemma~\ref{utezi}.\qed
\end{proof}
For $0<p<1$ and for a sequence $\{a_n\}_{n\geq 0}$, \textbf{define} sequences $\{a_n^x(p)\}_{n\geq 0}$, $\{a_n^y(p)\}_{n\geq 0}$ and $\{a^z_n(p)\}_{n\geq 0}$ as
\begin{align*}
a^x_n(p)&=\sum_{i=0}^{\lfloor pn-\epsilon(n)\rfloor}w_n^i(p) a_i\\
a^y_n(p)&=\sum_{i=\lfloor pn-\epsilon(n)\rfloor+1}^{\lfloor pn+\epsilon(n)\rfloor-1}w_n^i(p) a_i\\
a^z_n(p)&=\sum_{i=\lfloor pn+\epsilon(n)\rfloor}^n w_n^i(p) a_i.
\end{align*}
Hence, we have
$$a_n^{p*}=\frac{1}{n+1}\Big(a^x_n(p)+a^y_n(p)+a^z_n(p)\Big).$$
From Corollary~\ref{mejaUtezi} we see that the weights in $a^x_n(p)$ are very close to $\frac{1}{p}$, which suggests that $\frac{1}{n+1}a^x_n(p)$ can be very close to $a^*_{\lfloor np \rfloor}$ (see Lemma~\ref{ax} below). From the same corollary we see that $\frac{1}{n+1}a^z_n(p)$ can be very close to 0 (see Lemma~\ref{az} below). And because we have a sum of only $\TT(\epsilon(n))$ elements in $a^y_n(p)$, $\frac{1}{n+1}a^y_n(p)$ could also be very close to 0 (see Lemma~\ref{ay} below).

We have just described the main idea for the proof of the main theorem, which we give next. It will use three lemmas just mentioned (one about $a^x_n(p)$, one about $a^y_n(p)$ and one about $a^z_n(p)$), that will be proven later.
\begin{proof}[Proof of Theorem~\ref{mainThm}.]
Suppose that $a_n\geq 0$ for all $n$ and suppose that $\{a_n^p\}_{n\geq 0}$ converges to  $a\in\RR\cup\{\infty\}$ for some $0<p<1$. 
We know that this implies the convergence of $\{a_n^{p*}\}_{n\geq 0}$ to $a$ (Theorem~\ref{enost}). 

First, we deal with the case $a=\infty$. We can use the fact that $w_n^i(p)\leq\frac{1}{p}$ for all $i$ (see Lemma~\ref{utezi}), which gives
\begin{align*}
a_n^{p*}
&=\frac{1}{n+1}\sum_{i=0}^n w_n^i(p) a_i\\
&\leq\frac{1}{n+1}\sum_{i=0}^n \frac {1}{p} a_i\\
&=\frac{a^*_n}{p}.
\end{align*}
Hence, $\{a_n^{*}\}_{n\geq 0}$ converges to $a=\infty$.

In the case $a<\infty$, we can use Lemma~\ref{ay} and Lemma~\ref{az} to see that $\left\{\frac{1}{n+1}a^y_n(p)\right\}_{n\geq 0}$ and $\left\{\frac{1}{n+1}a^z_n(p)\right\}_{n\geq 0}$ converge to 0. Hence, $\left\{\frac{1}{n+1}a^x_n(p)\right\}_{n\geq 0}$ converges to $a$. Lemma~\ref{ax} tells us that in this case $\{a_n^{*}\}_{n\geq 0}$ also converges to $a$. \qed
\end{proof}
Now we state and prove the remaining lemmas.
\begin{lemma}
	\label{ay}
Let $0<p<1$ and let $\{a_n\}_{n\geq 0}$ be a sequence of non-negative real numbers such that $\{a^p_n\}_{n\geq 0}$ converges to $a<\infty$. Then $\left\{\frac{1}{n+1}a^y_n(p)\right\}_{n\geq 0}$ converges to 0.
\end{lemma}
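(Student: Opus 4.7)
The plan is to combine boundedness of $\{a^p_n\}_{n\geq 0}$ (which converges to $a<\infty$, hence $a^p_n\leq M$ for some constant $M$) with the trivial estimate $w_n^i(p)\leq\frac{1}{p}$ from Lemma~\ref{utezi}. It will then suffice to show that
$$S_n:=\sum_{i=\lfloor pn-\epsilon(n)\rfloor+1}^{\lfloor pn+\epsilon(n)\rfloor-1} a_i = \oo(n),$$
because then $\frac{1}{n+1}a^y_n(p)\leq \frac{S_n}{p(n+1)}\to 0$.

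The naive approach using Lemma~\ref{koren} gives only $a_i=\OO(\sqrt{n})$ pointwise, which summed over the $\TT(\sqrt{n}\log n)$ indices in the window yields $S_n=\OO(n\log n)$ --- useless. Instead, I would exploit the collective bound: for any $N$, $\sum_{i}B_N^i(p)a_i=a^p_N\leq M$, and by Lemmas~\ref{osrednji} and~\ref{okolicentra}, whenever $|i-pN|=\OO(\sqrt{N})$ one has $B_N^i(p)=\TT(1/\sqrt{N})$. Consequently $\sum_{|i-pN|\leq C\sqrt{N}} a_i=\OO(\sqrt{N})$ for each fixed $C$. The difficulty is that a single choice of $N$ only controls a strip of width $\OO(\sqrt{N})$ around $pN$, whereas the window appearing in $a^y_n(p)$ has width $2\epsilon(n)=2\sqrt{n}\log n$, a factor $\log n$ wider.

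To overcome this I would use a sliding-center trick. Partition the index range of $a^y_n(p)$ into $\OO(\log n)$ sub-windows, each of width $\sqrt{n}$; for the $j$-th sub-window set $N_j:=n+\bigl\lceil(j+\tfrac{1}{2})\sqrt{n}/p\bigr\rceil$, so that $pN_j$ lies near its center. Since $|j|=\OO(\log n)$, one has $N_j=\TT(n)$ and $|i-pN_j|=\OO(\sqrt{n})=\OO(\sqrt{N_j})$ for every $i$ in sub-window $j$, so Lemmas~\ref{osrednji} and~\ref{okolicentra} yield a uniform lower bound $B_{N_j}^i(p)\geq c/\sqrt{n}$ for a constant $c>0$ depending only on $p$. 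Non-negativity of $a_i$ combined with $a^p_{N_j}\leq M$ then gives $\sum_{i\in\text{sub-window }j} a_i \leq M\sqrt{n}/c$, and summing over the $\OO(\log n)$ sub-windows yields $S_n=\OO(\sqrt{n}\log n)=\oo(n)$, as required.

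The main obstacle, and the point requiring care, is verifying that the constant $c$ above does not degrade with $j$ or $n$: one applies Lemma~\ref{okolicentra} with $\beta$ of order $\sqrt{N_j}$ so that the exponential factor $e^{\beta^2/(p(1-p)N_j)}$ is bounded by a $p$-dependent constant, and checks that $N_j$ stays within a fixed multiple of $n$ for all $j$ with $|j|=\OO(\log n)$. Everything else --- boundedness of $\{a^p_n\}_{n\geq 0}$, the $1/p$ bound on the weights, and the arithmetic of tiling a window of width $\sqrt{n}\log n$ by $\OO(\log n)$ strips of width $\sqrt{n}$ --- is routine.
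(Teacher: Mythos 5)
Your proposal is correct and follows essentially the same strategy as the paper's proof: tile the window of width $2\epsilon(n)=2\sqrt{n}\log n$ into sub-windows, and for each sub-window pick a shifted index $N$ so that the boundedness of $a^p_N$ together with $B_N^i(p)=\TT(1/\sqrt{n})$ on that sub-window (Lemmas~\ref{osrednji} and~\ref{okolicentra}) forces the partial sum of the $a_i$ there to be $\OO(\sqrt{n})$, after which $w_n^i(p)\leq\frac{1}{p}$ finishes the argument. The only difference is the tile width --- the paper uses $\delta(n)=\lfloor\log^2 n\rfloor$ (so the ratio bound from Lemma~\ref{okolicentra} is $e^{\oo(1)}$ and $S_n=\OO(n/\log n)$), while you use $\sqrt{n}$ (a constant-factor ratio bound and the sharper $S_n=\OO(\sqrt{n}\log n)$); both suffice.
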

\begin{proof}
Fix $\tilde{\epsilon}>0$, define $\delta(n)=\lfloor\log^2(n)\rfloor$ and let $k:\NN\rightarrow\ZZ$ be such that $pn-\epsilon(n)\leq k(n)\leq pn+\epsilon(n)-\delta(n)$ holds for all $n$. We claim that 
$$\sum_{i=k(n)}^{k(n)+\delta(n)}a_i=\OO(\sqrt{n}),$$
where the constant behind the $\OO$ is independent of $k$.
To prove this, define $N=N(n)=\left\lfloor\frac{k(n)}{p}\right\rfloor$. It follows that $N=n\pm\TT(\epsilon(n))$. Note that, for large enough $n$,
$$\sum_{i=k(n)}^{k(n)+\delta(n)}a_i B_N^i(p)\leq\sum_{i=0}^{N}a_i B_N^i(p)< a+\tilde{\epsilon},$$
because $\{a^p_n\}_{n\geq 0}$ converges to $a$.
From Lemma~\ref{okolicentra} which bounds the coefficients $B_N^i(p)$ around $i=pN$ it follows that, for all $k(n)\leq i\leq k(n)+\delta(n)$,
$$B^{i}_N(p)\geq e^{-\oo(1)}B^{\lfloor Np\rfloor}_N(p).$$
Using $N=n\pm\TT(\epsilon(n))$ and the bound $$B^{\lfloor Np\rfloor}_N(p)=\frac{1}{\TT(\sqrt{N})}$$ from Lemma~\ref{osrednji}, we get
$$\sum_{i=k(n)}^{k(n)+\delta(n)}a_i <( a+\tilde{\epsilon}) e^{\oo(1)}\TT(\sqrt{n})=\OO(\sqrt{n}).$$

Next, we can see that 
$$\sum_{i=\lfloor pn-\epsilon(n)\rfloor+1}^{\lfloor pn+\epsilon(n)\rfloor-1}a_i=\OO\left(\frac{n}{\log n}\right).$$
Just partition the sum on the left-hand side into $\left\lceil\frac{2\epsilon(n)}{\delta(n)}\right\rceil$ sums of at most $\delta(n)$ elements. Then we have
$$\sum_{i=\lfloor pn-\epsilon(n)\rfloor+1}^{\lfloor pn+\epsilon(n)\rfloor-1}a_i=\OO\left(\frac{\epsilon(n)}{\delta(n)}\sqrt{n}\right)=\OO\left(\frac{n}{\log n}\right).$$

Now using $w_n^i(p)\leq\frac{1}{p}$ from Lemma~\ref{utezi}, we get
\begin{equation*}
\frac{1}{n+1}a^y_n(p)\leq \frac{1}{(n+1)p}\sum_{i=\lfloor pn-\epsilon(n)\rfloor+1}^{\lfloor pn+\epsilon(n)\rfloor-1} a_i= \frac{1}{(n+1)p}\OO\left(\frac{n}{\log n}\right),
\end{equation*}
which implies the convergence of $\left\{\frac{1}{n+1}a^y_n(p)\right\}_{n\geq 0}$ to 0.\qed
\end{proof}
\begin{lemma}
	\label{az}
Let $0<p<1$ and let $\{a_n\}_{n\geq 0}$ be a sequence of non-negative real numbers such that $\{a^p_n\}_{n\geq 0}$ converges to $a<\infty$. Then $\left\{\frac{1}{n+1}a^z_n(p)\right\}_{n\geq 0}$ converges to 0.
\end{lemma}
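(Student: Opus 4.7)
The plan is to bound $a^z_n(p)$ by combining two facts: the weights $w_n^i(p)$ decay to a super-polynomially small value once $i$ crosses the threshold $\lfloor pn+\epsilon(n)\rfloor$, and the individual terms $a_i$ grow at most like $\sqrt{n}$.

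First I would verify that $w_n^i(p)$ is a non-increasing function of $i$ on $\{0, 1, \ldots, n\}$. This is immediate from Lemma~\ref{utezi}: as $i$ grows by one, the subtracted partial sum $\sum_{j=0}^i \binom{n+1}{j} p^j (1-p)^{n+1-j}$ gains a non-negative term, so $w_n^i(p)$ can only shrink. Consequently, for every index $i$ in the range of the sum defining $a^z_n(p)$,
$$w_n^i(p)\;\leq\;w_n^{\lfloor pn+\epsilon(n)\rfloor}(p)\;\leq\;n^{-\TT(\log n)},$$
where the last inequality is exactly Corollary~\ref{mejaUtezi}.

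Next I would invoke Lemma~\ref{koren}: since $\{a_n^p\}_{n\geq 0}$ converges to a finite $a$ and the $a_n$ are non-negative, there is a constant $M>0$ such that $a_i\leq M\sqrt{i}\leq M\sqrt{n}$ for all $i\leq n$ (for all sufficiently large $n$). Combining this bound with the weight estimate and using that there are at most $n+1$ summands gives
$$0\;\leq\;a^z_n(p)\;=\;\sum_{i=\lfloor pn+\epsilon(n)\rfloor}^n w_n^i(p)\,a_i\;\leq\;(n+1)\cdot n^{-\TT(\log n)}\cdot M\sqrt{n}\;=\;n^{-\TT(\log n)}.$$
Dividing by $n+1$ leaves a quantity still of the form $n^{-\TT(\log n)}$, which converges to $0$ as $n\to\infty$. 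Hence $\left\{\tfrac{1}{n+1}a^z_n(p)\right\}_{n\geq 0}$ converges to $0$, as required.

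There is essentially no obstacle: the whole point of the definitions of $\epsilon(n)$ and of the splitting $a_n^{p*}= \tfrac{1}{n+1}(a^x_n(p)+a^y_n(p)+a^z_n(p))$ is that the Chernoff-type tail bound in Corollary~\ref{mejaUtezi} beats any polynomial growth of the $a_i$. The only place one must be careful is to invoke Lemma~\ref{koren} (so that the $a_i$ in the sum are controlled by $\OO(\sqrt{n})$ rather than something arbitrary), which is legitimate because our hypothesis matches that lemma's hypothesis exactly.
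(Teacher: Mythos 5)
Your proof is correct and follows essentially the same route as the paper: monotonicity of the weights $w_n^i(p)$ from Lemma~\ref{utezi}, the tail bound $w_n^{\lfloor np+\epsilon(n)\rfloor}(p)\leq n^{-\TT(\log n)}$ from Corollary~\ref{mejaUtezi}, and the growth bound $a_i=\OO(\sqrt{i})$ from Lemma~\ref{koren}, combined so that the super-polynomially small weight dominates the polynomially many, polynomially sized terms. No gaps.
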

\begin{proof}
From Lemma~\ref{utezi} we see that the weights $w_n^i(p)$ decrease with $i$, so
\begin{equation*}
\frac{1}{n+1}a^z_n(p)\leq \frac{w_n^{\lfloor np+\epsilon(n)\rfloor}(p)}{(n+1)}\sum_{\lfloor pn+\epsilon(n)\rfloor}^n a_i.
\end{equation*}
 Corollary~\ref{mejaUtezi} gives us 
$w_n^{\lfloor np+\epsilon(n)\rfloor}(p)\leq n^{-\TT(\log(n))},$
while Lemma~\ref{koren} implies $a_i=\OO(\sqrt{i})$. Hence,  $\left\{\frac{1}{n+1}a^z_n(p)\right\}_{n\geq 0}$ converges to 0.\qed
\end{proof}
\begin{lemma}
	\label{ax}
Let $0<p<1$ and let $\{a_n\}_{n\geq 0}$ be a sequence of non-negative real numbers such that $\left\{\frac{1}{n+1}a^x_n(p)\right\}_{n\geq 0}$ converges to $a<\infty$. Then $\{a^*_n\}_{n\geq 0}$ converges to $a$.
\end{lemma}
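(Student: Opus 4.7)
The plan is to sandwich $\frac{1}{n+1}a_n^x(p)$ between two expressions involving the Ces\`{a}ro mean $a^*_{M(n)}$, where $M(n) = \lfloor pn - \epsilon(n)\rfloor$. By Lemma~\ref{utezi} the weights $w_n^i(p)$ are non-increasing in $i$ (each new term in the sum $\sum_{j=0}^i B_{n+1}^j(p)$ is non-negative), and by Corollary~\ref{mejaUtezi} we have $w_n^{M(n)}(p) \geq \frac{1}{p} - n^{-\TT(\log(n))}$. Combined with the trivial upper bound $w_n^i(p) \leq \frac{1}{p}$ and the hypothesis $a_i \geq 0$, this yields
\[
\left(\frac{1}{p} - n^{-\TT(\log(n))}\right)\sum_{i=0}^{M(n)} a_i \;\leq\; a_n^x(p) \;\leq\; \frac{1}{p}\sum_{i=0}^{M(n)} a_i.
\]

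Next, I divide by $n+1$ and write $\sum_{i=0}^{M(n)} a_i = (M(n)+1)\, a^*_{M(n)}$, obtaining
\[
\left(\frac{1}{p} - n^{-\TT(\log(n))}\right)\frac{M(n)+1}{n+1}\, a^*_{M(n)} \;\leq\; \frac{1}{n+1}a_n^x(p) \;\leq\; \frac{1}{p}\cdot\frac{M(n)+1}{n+1}\, a^*_{M(n)}.
\]
Since $\epsilon(n)=\oo(n)$, the ratio $\frac{M(n)+1}{n+1}\to p$, so both outer factors tend to $1$. Together with the hypothesis $\frac{1}{n+1}a_n^x(p)\to a$, this pins down $a^*_{M(n)}\to a$.

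It then remains to promote this subsequential convergence to convergence of the full sequence $\{a^*_m\}_{m\geq 0}$. For this I will check that, for large $n$, one has $M(n+1)-M(n)\in\{0,1\}$: the real-valued difference $p-(\epsilon(n+1)-\epsilon(n))$ tends to $p\in(0,1)$, because $\epsilon(n)=\sqrt{n}\log(n)$ has derivative tending to $0$. Consequently $\{M(n)\}$ is eventually non-decreasing and attains every sufficiently large integer value, while $M(n)\to\infty$. So for each large $m$ I pick some $n$ with $M(n)=m$, necessarily with $n\to\infty$, and conclude $a^*_m\to a$.

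I expect no serious obstacle: the sandwich is immediate and controlling $M(n+1)-M(n)$ is routine. The only place where the hypothesis $a_n\geq 0$ is genuinely used is the very first step, where pointwise inequalities on the weights $w_n^i(p)$ are converted into inequalities for the weighted sum $a_n^x(p)$; without non-negativity the bounds in the sandwich might flip.
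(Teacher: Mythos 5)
Your proof is correct and follows essentially the same route as the paper: the same sandwich of $a^*_{\lfloor pn-\epsilon(n)\rfloor}$ between $\frac{a^x_n(p)}{n+1}$ scaled by the two weight bounds from Lemma~\ref{utezi} and Corollary~\ref{mejaUtezi}, using $a_i\geq 0$ in exactly the same place. The only difference is that you spell out the final passage from convergence along the subsequence $m=\lfloor pn-\epsilon(n)\rfloor$ to convergence of the full sequence (via $M(n+1)-M(n)\in\{0,1\}$), a step the paper leaves implicit.
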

\begin{proof}
Because the weights $w_n^i(p)$ are bounded from above by $\frac{1}{p}$ (Lemma~\ref{utezi}), we have
$$\frac{a^x_n(p)}{n+1}\cdot\frac{(n+1)p}{\lfloor pn-\epsilon(n)\rfloor+1}\leq a^*_{\lfloor pn-\epsilon(n)\rfloor},$$
where the left side converges to $a$.

Because the weights $w_n^i(p)$ decrease with $i$  (Lemma~\ref{utezi}) and because 
$w_n^{\lfloor np-\epsilon(n)\rfloor}(p)\geq\frac{1}{p}-n^{-\TT(\log(n))}$ (Corollary~\ref{mejaUtezi}), we have 
\begin{equation*}
a^*_{\lfloor pn-\epsilon(n)\rfloor}\leq\frac{a^x_n(p)}{n+1}\cdot\frac{n+1}{(\frac{1}{p}-n^{-\TT(\log(n))})\cdot(\lfloor pn-\epsilon(n)\rfloor+1)},
\end{equation*}
where the right side converges to $a$.
Hence, $a^*_{\lfloor pn-\epsilon(n)\rfloor}$ is sandwiched between two sequences that converge to $a$. It follows that $\{a^*_n\}_{n\geq 0}$ converges to $a$.\qed
\end{proof}
\section{Application of Theorem~\ref{mainThm}: a limit theorem for finite Markov chains}
	\label{applications}
For a stochastic matrix\footnote{A stochastic matrix is a (possibly infinite) square matrix that has non-negative real entries and for which all rows sum to 1. Each stochastic matrix represents transition probabilities of some discrete Markov chain. No prior knowledge of Markov chains is needed for this paper.
} 
$P$, define the sequence $\{P_n\}_{n\geq0}$ as $P_n=P^n$. As in the one-dimensional case, we define the sequence $\{P_n^*\}_{n\geq0}$ as $P_n^*=\frac{1}{n+1}\sum_{i=0}^nP_n$. We say that $\{P_n\}_{n\geq0}$ converges to $A$ if, for all possible pairs $(i,j)$, the sequence of $(i,j)$-th elements of $P_n$ converges to $(i,j)$-th element of $A$.
In this section, we will prove the following theorem.
\begin{theorem}
	\label{uporaba}
For any finite stochastic matrix $P$, the sequence $\{P^*_n\}_{n\geq0}$ converges to some stochastic matrix $A$, such that $AP=PA=A$.
\end{theorem}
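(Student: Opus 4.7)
The plan is to apply Theorem~\ref{mainThm} entry-by-entry. Since each $P^n$ is again a stochastic matrix, the entries $(P^n)_{ij}$ are non-negative real numbers, so if I can produce an entrywise limit of the $p$-binomial means $\{P_n^p\}_{n\geq 0}$ for a single $p\in(0,1)$, Theorem~\ref{mainThm} will hand me the same entrywise limit for $\{P_n^*\}_{n\geq 0}$. The key algebraic observation is that the binomial mean of the matrix sequence $\{P^n\}_{n\geq 0}$ collapses to a single matrix power: since $I$ and $P$ commute, the binomial theorem gives
\[
P_n^p \;=\; \sum_{i=0}^n \binom{n}{i}p^i(1-p)^{n-i}P^i \;=\; \bigl((1-p)I+pP\bigr)^n \;=\; Q^n,
\]
where $Q=(1-p)I+pP$ is itself a finite stochastic matrix. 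The problem therefore reduces to showing that $Q^n$ has an entrywise limit as $n\to\infty$.

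For that I would work with the Jordan canonical form $P=SJS^{-1}$, which conjugates to $Q^n = S\bigl((1-p)I+pJ\bigr)^n S^{-1}$. A Jordan block of $P$ of size $m$ at eigenvalue $\lambda$ becomes a Jordan-type block of size $m$ at $\mu=(1-p)+p\lambda$, whose $n$-th power has entries of the form $\binom{n}{j}p^j\mu^{n-j}$ for $0\leq j<m$. Because $P^n$ is uniformly bounded (entries in $[0,1]$), every eigenvalue of $P$ satisfies $|\lambda|\leq 1$ and, by a standard power-boundedness argument, every Jordan block at $\lambda=1$ must be $1\times 1$. For $\lambda\neq 1$ with $|\lambda|\leq 1$, the value $\mu$ is a strict convex combination of $1$ and $\lambda$ and hence lies strictly inside the open unit disk, so $\binom{n}{j}\mu^{n-j}\to 0$ at a geometric rate despite the polynomial factor. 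For $\lambda=1$ we have $\mu=1$ and $m=1$, giving a constant block. In total, $Q^n$ converges to some matrix $A$.

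Applying Theorem~\ref{mainThm} to each entry-sequence then yields $(P_n^*)_{ij}\to A_{ij}$ for every pair $(i,j)$, i.e.\ $P_n^*\to A$ entrywise. Each $P_n^*$ is a convex combination of the stochastic matrices $P^0,\ldots,P^n$ and is therefore itself stochastic; since non-negativity and the property ``rows sum to $1$'' are preserved by entrywise limits, $A$ is stochastic. Finally, the telescoping identity
\[
P_n^*\,P \;=\; \frac{1}{n+1}\sum_{i=1}^{n+1}P^i \;=\; P_n^* + \frac{P^{n+1}-I}{n+1},
\]
and its mirror $P\,P_n^*$, give $AP=PA=A$ upon letting $n\to\infty$ and noting that $(P^{n+1}-I)/(n+1)\to 0$ entrywise (since $P^{n+1}$ is bounded).

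The main obstacle I foresee is the Jordan-block bookkeeping for $Q$: carefully justifying that boundedness of $P^n$ forbids non-trivial Jordan blocks at the eigenvalue $1$, and checking that the map $\lambda\mapsto(1-p)+p\lambda$ sends every other spectral point of $P$ strictly inside the open unit disk. Once this linear-algebraic step is nailed down, the convergence of $Q^n$, and with it of $P_n^*$, follows at once from Theorem~\ref{mainThm}.
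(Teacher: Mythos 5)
Your proposal is correct and follows essentially the same route as the paper: collapse the $p$-binomial mean of $\{P^n\}_{n\geq 0}$ to powers of the stochastic matrix $(1-p)I+pP$, prove convergence of those powers via the Jordan form together with power-boundedness (which forces $|\lambda|\leq 1$, pushes every eigenvalue other than $1$ strictly inside the unit disk after the affine map, and rules out nontrivial Jordan blocks at $1$), then invoke Theorem~\ref{mainThm} entrywise and finish with the telescoping identity for $AP=PA=A$. The only cosmetic difference is that the paper fixes $p=\tfrac12$ and runs the spectral analysis directly on $\tilde{P}=\tfrac12(P+I)$ rather than on $P$.
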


This theorem is nothing new in the theory of Markov chains. Actually, it also holds for (countably)  infinite transition matrices $P$. Although we did not find it formulated this way in literature, it can be easily deduced from the known results. The hardest thing to show is the convergence of $\{P^*_n\}_{n\geq0}$~\cite[page 32]{MVerige}. After we have it, we can continue as in the proof of Theorem~\ref{uporaba} below.


We will give a short proof of Theorem~\ref{uporaba}, using only linear algebra and Theorem~\ref{mainThm}. First, we prove a result from linear algebra.
\begin{lemma}
	\label{souporaba}
Let $P$ be a finite stochastic matrix and let $\tilde{P}=\frac{1}{2}\Big(P+I\Big)$. Then
\begin{enumerate}[\hspace{0.4cm} a)]
\item for all eigenvalues $\lambda$ of $\tilde{P}$, it holds $|\lambda|\leq 1$,
	\label{souporaba1}
\item for all eigenvalues $\lambda$ of $\tilde{P}$ for which $|\lambda| =1$, it holds $\lambda = 1$,
	\label{souporaba2}
\item the algebraic and geometric multiplicity of eigenvalue 1 of $\tilde{P}$ are the same.
	\label{souporaba3}
\end{enumerate}
\end{lemma}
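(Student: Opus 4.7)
My plan is to first note the key fact that $\tilde{P}$ is itself stochastic: its entries are non-negative (as non-negative combinations of entries of $P$ and $I$), and each row sums to $\tfrac12(1+1)=1$. In particular every power $\tilde{P}^n$ is stochastic, so each entry of $\tilde{P}^n$ lies in $[0,1]$ and stays bounded as $n\to\infty$. I will also use the eigenvalue correspondence $\tilde{P}\mathbf{v}=\lambda\mathbf{v}\iff P\mathbf{v}=(2\lambda-1)\mathbf{v}$, so $\lambda$ is an eigenvalue of $\tilde{P}$ iff $\mu:=2\lambda-1$ is an eigenvalue of $P$.

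For part (\ref{souporaba1}), I would invoke (or quickly prove) the standard fact that a stochastic matrix $M$ has spectral radius at most $1$: if $M\mathbf{v}=\mu\mathbf{v}$ and $i$ maximizes $|v_i|$, then $|\mu||v_i|=\bigl|\sum_j M_{ij}v_j\bigr|\leq\sum_j M_{ij}|v_j|\leq|v_i|$. Applied to $\tilde{P}$ (which is stochastic), this gives $|\lambda|\leq1$. For part (\ref{souporaba2}), write $\lambda=\tfrac{1+\mu}{2}$ with $|\mu|\leq1$, where $\mu$ is an eigenvalue of $P$. Then $|\lambda|=1$ forces $|1+\mu|=2$, and by the triangle inequality $|1+\mu|\leq 1+|\mu|\leq 2$ with equality only when $\mu$ is a non-negative real with $|\mu|=1$, i.e.\ $\mu=1$; hence $\lambda=\tfrac{1+1}{2}=1$.

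For part (\ref{souporaba3}), which I expect to be the main obstacle, I would argue by contradiction using the boundedness of $\tilde{P}^n$. Suppose the algebraic multiplicity of eigenvalue $1$ exceeds the geometric one; then the Jordan form of $\tilde{P}$ has a block of size at least $2$ for the eigenvalue $1$, so there exist vectors $\mathbf{v},\mathbf{w}$ with $\mathbf{w}\neq\mathbf{0}$, $(\tilde{P}-I)\mathbf{v}=\mathbf{w}$ and $(\tilde{P}-I)\mathbf{w}=\mathbf{0}$. Expanding $\tilde{P}^n=(I+(\tilde{P}-I))^n$ by the binomial theorem and applying it to $\mathbf{v}$, all terms with $(\tilde{P}-I)^k$ for $k\geq 2$ kill $\mathbf{v}$, leaving
\[
\tilde{P}^n\mathbf{v}=\mathbf{v}+n\mathbf{w}.
\]
Thus $\|\tilde{P}^n\mathbf{v}\|_\infty\to\infty$. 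On the other hand, since $\tilde{P}^n$ is stochastic, every entry lies in $[0,1]$, so $\|\tilde{P}^n\mathbf{v}\|_\infty\leq (\text{number of columns})\cdot\|\mathbf{v}\|_\infty$ is bounded independently of $n$. This contradiction forces the two multiplicities to coincide, completing the proof. The only subtle point is being careful that the Jordan block argument only requires finite dimensionality, which is exactly what the hypothesis ``$P$ is finite'' provides.
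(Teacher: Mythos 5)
Your proof is correct and follows essentially the same route as the paper: stochasticity of $\tilde{P}$ and its powers, the observation that the spectrum of $\tilde{P}$ lies in the disk of radius $\tfrac12$ centered at $\tfrac12$ for part b), and the Jordan-block growth $\tilde{P}^n\mathbf{v}=\mathbf{v}+n\mathbf{w}$ contradicting boundedness for part c). The only cosmetic difference is in part a), where you bound the spectral radius by the direct maximum-component inequality while the paper argues via boundedness of the entries of $P^n$ as $n\to\infty$; both are standard and equivalent in strength here.
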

\begin{proof}
Since the product and convex combination of stochastic matrices is a stochastic matrix, $P^n$ and $\tilde{P}^n$ are stochastic matrices for each $n\in\NN$.
First, we will prove by contradiction that for all eigenvalues $\lambda$ for $P$, it holds $|\lambda|\leq 1$. Suppose that there is some eigenvalue $\lambda$ for $P$ such that $|\lambda|>1$. Let $w$ be the corresponding eigenvector and let its $i$-th component be non-zero. Then $|(P^nw)_i|=|\lambda^n|\cdot| w_i|$, where the right side converges to $\infty$ and the left side is bounded by $\max_j |w_j|$ (since $P^n$ is a stochastic matrix). This gives a contradiction. Hence, for all eigenvalues $\lambda$ for $P$, it holds $|\lambda|\leq 1$. Because $\tilde{P}$ is also stochastic, the same holds for $\tilde{P}$.

We see that we can get all eigenvalues of $\tilde{P}$ by adding 1 and dividing by 2 the eigenvalues of $P$. Because $P$ has all eigenvalues in the unit disc around 0, $\tilde{P}$ has all eigenvalues in a disc centered in $\frac{1}{2}$ of radius $\frac{1}{2}$. Hence, for all eigenvalues $\lambda$ of $\tilde{P}$, for which $|\lambda| =1$, it holds $\lambda = 1$.

For the last claim of the lemma, suppose that the algebraic and geometric multiplicity of eigenvalue 1 of $\tilde{P}$ are not the same. Then, by Jordan decomposition, there is an eigenvector $v$ for eigenvalue 1 and a vector $w$, such that $\tilde{P}w=v+w$. Then, for each $n\in\NN$, we have $\tilde{P}^nw=nv+w$. Because $v$ has at least one non-zero component and because all components of $\tilde{P}^nw$ are bounded by absolute value by $\max_j |w_j|$, we have come to contradiction. Hence, the algebraic and geometric multiplicity of eigenvalue 1 of $\tilde{P}$ are the same.\qed
\end{proof}
\begin{proof}[Proof of Theorem~\ref{uporaba}]
For a matrix $\tilde{P}=\frac{1}{2}\Big(P+I\Big)$, let $\tilde{P}=XJX^{-1}$ be its Jordan decomposition. From Lemma~\ref{souporaba}~\ref{souporaba1}) and~\ref{souporaba2}) it follows that the diagonal of $J$ consists only of ones and entries of absolute value strictly less than one. From Lemma~\ref{souporaba}~\ref{souporaba3}) it follows that the Jordan blocks for eigenvalue 1 are all $1\times 1$. It follows that $J^n$ converges to some matrix $J_0$ with only zero entries and some ones on the diagonal. Hence, $\tilde{P}^n$ converges to $A=XJ_0X^{-1}$. Since $\tilde{P}^n$ is a stochastic matrix for all $n$, the same is true for $A$. 
 Using $\tilde{P}_n=\tilde{P}^n$, we see that $\{\tilde{P}_n\}_{n\geq 0}$ is just a sequence of $0.5$-binomial means of the sequence $\{P_n\}_{n\geq 0}$, hence by Theorem~\ref{mainThm} $\{P^*_n\}_{n\geq0}$ also converges to $A$. Thus, we have
\begin{align*}
AP
&=\left(\lim_{n\rightarrow\infty} \frac{1}{n+1}\sum_{i=0}^nP^i\right)P\\
&=\lim_{n\rightarrow\infty} \frac{n+2}{n+1}\left(\frac{1}{n+2}\sum_{i=0}^{n+1}P^i-\frac{1}{n+2}I\right)\\
&=A.
\end{align*}
The same argument shows also $PA=A$.\qed
\end{proof}

\paragraph{Acknowledgements.} The author wishes to thank his research advisor Sergio Cabello, as well as Matjaž Konvalinka and Marko Petkovšek for valuable comments and suggestions.

\vspace{1cm}

\noindent All figures were done with Mathematica 8.0.1.0.
%
%
%

\bibliographystyle{abbrv}
\bibliography{literature}

\end{document}